\newcolumntype{M}[1]{>{\centering\arraybackslash}m{#1}}
\newcolumntype{N}{@{}m{0pt}@{}}
\newtheorem{theorem}{Theorem}[section]
\newtheorem{prop}[theorem]{Proposition}
\newtheorem{lemma}[theorem]{Lemma}
\def\Z{\mathbb{Z}}
\def\R{\mathbb{R}}
\def\EE{\mathbb{E}}
\def\PP{\mathbb{P}}
\def\L{\mathcal{L}}
\def\bZ{\mathbb{Z}}
\def\tS{\mathtt{S}}
\newcommand{\indicator}{{\mathbbm 1}}
\newenvironment{customthm}[1]
{\innercustomthm}
{\endinnercustomthm}
\newenvironment{customlemma}[1]
{\innercustomlemma}
{\endinnercustomlemma}
\newenvironment{customprop}[1]
{\innercustomprop}
{\endinnercustomprop}
\theoremstyle{definition}
\newenvironment{customexample}[1]
{\innercustomexample}
{\endinnercustomexample}
\newenvironment{customremark}[1]
{\innercustomremark}
{\endinnercustomremark}
\renewcommand{\1}{\mathbbm{1}}
\def\L{\mathbf{l}}
\def\R{\mathbf{r}}
\def\nbe{\mathtt{ne}}
\DeclareRobustCommand{\cev}[1]{\reflectbox{\ensuremath{\vec{\reflectbox{\ensuremath{#1}}}}}}
\DeclareRobustCommand{\cev}[1]{\reflectbox{\ensuremath{\vec{\reflectbox{\ensuremath{#1}}}}}}
\begin{document}
	
	\title[Persistence of random walks with correlated increments]{Persistence of sums of correlated increments \\
	and clustering in cellular automata	
}

	\author{Hanbaek Lyu}
	\address{Hanbaek Lyu, Department of Mathematics, The Ohio State University, Columbus, OH 43210.}
	\email{\texttt{colourgraph@gmail.com}}
	
	\author{David Sivakoff}
	\address{David Sivakoff, Department of Statistics and Mathematics, The Ohio State University, Columbus, OH 43210}
	\email{\texttt{dsivakoff@stat.osu.edu}}
	
	\date{\today}
	
	\keywords{
		Survival probability,
		running maximum,
		Markov chain,
		correlated increments, 
		cellular automata, 
		annihilating particle systems		
	}
	\subjclass[2010]{37K40, 60F05}

	\begin{abstract}
	We consider sums of increments given by a functional of a stationary Markov chain. Letting $T$ be the first return time of the partial sums process to $(-\infty,0]$, under general assumptions, we determine the asymptotic behavior of the survival probability, $\mathbb{P}(T\ge t)\sim Ct^{-1/2}$ for an explicit constant $C$. Our analysis is based on a connection between the survival probability and the running maximum of the time-reversed process, and relies on a functional central limit theorem for Markov chains. Our result extends the classic theorem of Sparre Anderson on sums of mean zero and independent increments to the case of correlated increments. As applications, we recover known clustering results for the 3-color cyclic cellular automaton and the Greenberg-Hastings model in one dimension, and we prove a new clustering result for the 3-color firefly cellular automaton.

	\end{abstract}

	\maketitle

	\section{Introduction}
	\label{Introduction}

	Let $(\mathtt{S}_{t})_{t\in \bZ_+}$ be a discrete-time real valued stochastic process with $\mathtt{S}_{0}=r$, and for now let us assume $r=0$. For 
	$\ell\le0$, let $T_{\ell}=\inf\{ t\ge 1\,:\, \mathtt{S}_{t}\le \ell \}$ be the first passage time to $(-\infty,\ell]$. The \textit{survival probability} $\mathbb{P}(T_{\ell}\ge t)$, or the \textit{persistence} of the process, has been extensively studied as a means of understanding non-equilibrium systems such as Ising or Potts models, diffusion equations and fluctuating interfaces (for surveys, see \cite{bray2013persistence}, \cite{aurzada2015persistence}).    
	
	The classical setting is to take $\mathtt{S}_{t}$ to be the sum of $t$ i.i.d. increments. A cornerstone result, known as Sparre-Anderson's formula (see e.g., \cite{andersen1953sums} and Theorem XII.7.1 in \cite{feller1971introduction}), gives the generating function of the probabilities $\mathbb{P}(T_{0}\ge t)$ in terms of that of $t^{-1}\mathbb{P}(\mathtt{S}_{t}\le 0)$. That is, for all $u\in [0,1)$, 
	\begin{equation}\label{eq:sparre_anderson}
	F(u):=\sum_{t\ge 0} \mathbb{P}(T_{0}\ge t) u^{t} = \exp\left( \sum_{t\ge 1} \mathbb{P}(\mathtt{S}_{t}\le 0)\frac{u^{t}}{t} \right).
	\end{equation}
	A stronger form of (\ref{eq:sparre_anderson}) is known as Spitzer's identity \cite{spitzer1956combinatorial}.
	
	If $\mathbb{P}(\mathtt{S}_{t}\le 0)=\rho \in (0,1)$ for all large enough $t$, then (\ref{eq:sparre_anderson}) gives $F(u)\sim (1-u)^{\rho}$ as $u\rightarrow 1^{-}$, so a Tauberian theorem for monotonic sequences (Theorem XIII.5.5 in \cite{feller1971introduction}) gives 
	\begin{equation}\label{eq:i.i.d. increments with infinite variance}
	\mathbb{P}(T_{0}\ge t)  \sim t^{\rho-1}/\Gamma(\rho).
	\end{equation}
	In the case of the simple symmetric random walk, that is, when $\tS_t = \sum_{k=1}^t \eta_k$ and $(\eta_k)_{k\ge1}$ are i.i.d.\ uniform on $\{-1,1\}$, the classic computation 
	\begin{equation*}
	F(u) = \left( \frac{1}{1-u} - \frac{1-\sqrt{1-u^{2}}}{u(1-u)} \right)\sim \sqrt{\frac{2}{1-u} } \quad \text{as} \quad u\rightarrow 1^{-}
	\end{equation*}  
	gives 
	\begin{equation}\label{eq:SRW_asympotitc}
	\mathbb{P}(T_{0}\ge t)  \sim \sqrt{2/\pi t}.
	\end{equation}
	
	Many classical results of random walks with i.i.d.~increments has been generalized to broader classes of random walks with correlations between increments. One such class of processes are called Markov additive processes (MAPs), which is a two dimensional Markov process $(S_{t}, E_{t})$, where $S_t$ is a real-valued additive process and $E_t$ is an underlying state variable. Various aspects of MAPs have been studied, including analogues of Spitzer's identity and Wiener-Hopf factorization. See, for example, Presman \cite{presman1969factorization} and Arjas and Speed \cite{arjas1973symmetric} for discrete time MAPs, and Klusik and Palmowski \cite{klusik2014note} and Ivanovs \cite{ivanovs2017splitting} for more recent accounts of continuous time MAPs. To our knowledge, a precise analogue of (\ref{eq:SRW_asympotitc}) for MAPs has not been derived explicitly in the literature.
	
	In the present paper, we obtain an analogue of (\ref{eq:SRW_asympotitc}) for a special case of discrete time MAPs, namely the \textit{additive functionals of Markov chains} \cite{blumenthal1964additive}, where the increments $S_{n+1}-S_{n}$ are given by a functional of the underlying state, $g(E_{n})$. Our approach relies on a simple combinatorial observation, given in Lemma~\ref{lemma:key}, which allows us to relate the survival probability to the running maximum of a time-reversed random walk, which is a well-understood process. This relationship (Theorem \ref{thm:key_relation}) holds for the general case of stationary processes. To obtain the desired asymptotic in Theorem \ref{thm:general_survival_Brownian}, a functional central limit theorem for additive functionals of Markov chains is used to extract the asymptotic of the expected running maximum of a time-reversed process.

	We now give precise statements of our main results. Consider a bi-infinite discrete-time stationary process $(\mathtt{X}_{t})_{t\in \mathbb{Z}}$ on general state space $(\mathfrak{X},\mathcal{F})$. Let $\PP$ denote the law of the process and $\EE$ denote expectation with respect to $\PP$.  Let $g:\mathfrak{X}\rightarrow \mathbb{R}$ be a measurable function with $\mathbb{E}(g(\mathtt{X}_{0}))=0$. Define the random walk $(\mathtt{S}_{t})_{t\ge 0}$ by $\mathtt{S}_{t}=\mathtt{S}_{t-1}+g(\mathtt{X}_{t})$ for $t\ge 1$ with initial condition $\mathtt{S}_{0}=r$. For each $r\in \mathbb{R}$, $x\in \mathfrak{X}$, and $t\ge 0$, define \textit{survival probabilities} $\mathtt{Q}^{x}(r,t)$ and $\mathtt{Q}^{\bullet}(r,t)$ by  
	\begin{equation}\label{def:conditional_survival}
	\begin{aligned}
	\mathtt{Q}^{x}(r,t) &:= \mathbb{P}(\mathtt{S}_{1}\ge 0,\cdots, \mathtt{S}_{t}\ge 0\,|\, \mathtt{S}_{0}=r,\, \mathtt{X}_{0}=x),\quad \text{and} \\ \mathtt{Q}^{\bullet}(r,t) &:= \mathbb{P}(\mathtt{S}_{1}\ge 0,\cdots, \mathtt{S}_{t}\ge 0\,|\, \mathtt{S}_{0}=r).
	\end{aligned}
	\end{equation}
	Define the \textit{backward random walk} $(\cev\tS_{t})_{t\ge 0}$ by $\cev\tS_t =\sum_{s=-t}^{-1} g(\mathtt{X}_{s})$ for $t\ge 1$ and $\cev\tS_0=0$, and the \textit{backward running maximum} $\cev{M}(t)$ by
	\begin{equation}\label{def:backward_maximum}
	\cev{M}(t) = \max_{0\le k \le t} \cev\tS_{k}
	\end{equation}
for $t\ge 0$.

	Our key observation is the following general relation between the asymptotic behavior of the expected backward running maximum and the survival probability integrated over starting position, $r$.  
	
	\begin{customthm}{1}\label{thm:key_relation}
		Let $\mathtt{Q}^{\bullet}(r,t)$ and $\cev{M}(t)$ be defined as in~\eqref{def:conditional_survival} and~\eqref{def:backward_maximum}. For any constants $C>0$ and $\rho\in (0,1)$,  the following two statements are equivalent:
		\begin{description}
			\item[(i)] $\mathbb{E}(\cev{M}(t))\sim  C t^{1-\rho}$, 
			\vspace{0.1cm}
			\item[(ii)] $\int_{0}^{\infty} \mathtt{Q}^{\bullet}(-r,t) \,dr \sim \rho C t^{-\rho}.$
		\end{description}
	\end{customthm}
	Theorem~\ref{thm:key_relation} is useful because in many cases the asymptotics of $\mathbb{E}(\cev{M}(t))$ are easily accessible via a functional central limit theorem, so we can obtain the precise asymptotics for the integrated survival probability. 

	Now we take $(\mathtt{X}_{t})_{t\in \mathbb{Z}}$ to be a strictly stationary Markov chain on $\mathfrak{X}$ with transition kernel $P$ and unique stationary measure $\pi$. 
	Suppose $\EE(g(\mathtt{X}_{0})^2)<\infty$ and the following quantity   
	\begin{equation}\label{def:limiting_variance}
	\gamma_{g}^{2} := \text{Var}[g(\mathtt{X}_{0})]+2\sum_{k=1}^{\infty}\text{Cov}[g(\mathtt{X}_{0}),g(\mathtt{X}_{k})]
	\end{equation}
	is finite. Here $\gamma^{2}_{g}$ is called the $\textit{limiting variance}$ of the additive processes $\mathtt{S}_{n}$ and also for $\cev{\mathtt{S}}_{n}$.
	Further assume that the time-reversed chain $(\mathtt{X}_{-t})_{t\ge 0}$ is ergodic. Then the scaling limit of the backward running maximum $\cev{M}(t)$ is readily accessible by a functional central limit theorem for Markov chains. Namely, if we let $[\cev\tS](\cdot):[0,\infty)\rightarrow \mathbb{R}$ denote the linear interpolation of the points $((t,\cev\tS_{t}))_{t\ge 0}$, then 
\begin{equation*}
\left( t^{-1/2}[\cev\tS](ts)\,:\,0\le s \le 1\right) \overset{d}{\longrightarrow} \gamma_{g} B
\end{equation*}
as $t\rightarrow \infty$, where $B=(B_{s}\,:\, 0\le s\le 1)$ is the standard Brownian motion and $\gamma_{g}=\sqrt{\gamma_{g}^{2}}$. See, for example, Corollary 3 of Dedecker and Rio \cite{dedecker2000functional} or Theorem 17.4.4 of Meyn and Tweedie \cite{meyn2012markov}. From this and uniform integrability, which is proved in \cite{dedecker2000functional}, we get
\begin{equation}\label{eq:maximum_asymptotic}
\mathbb{E}(\cev{M}(t)) \sim \gamma_{g} \sqrt{\frac{2t}{\pi}}.
\end{equation}
This, combined with Theorem~\ref{thm:key_relation}, gives the following result.

\begin{customthm}{2}\label{thm:general_survival_Brownian}
	Suppose $\mathbb{E}(g(\mathtt{X}_{0})^{2})<\infty$, $\gamma_{g}\in (0,\infty)$, and $(X_{-t})_{t\ge 0}$ is ergodic. Then we have 
	\begin{equation}
	\int_{0}^{\infty} \mathtt{Q}^{\bullet}(-r,t) \,dr \sim \frac{\gamma_{g}}{\sqrt{2\pi}} t^{-1/2}.
	\end{equation}
\end{customthm}

	We emphasize that Theorem \ref{thm:general_survival_Brownian} applies even when the increments have long-range correlation, as opposed to the classical Sparre-Anderson setting with independent increments. 
	
	As an application of Theorem \ref{thm:general_survival_Brownian}, we establish a clustering behavior of the 3-color \textit{firefly cellular automaton} (FCA) on the one dimensional integer lattice $\mathbb{Z}$. The FCA was introduced recently by the first author as a discrete model for pulse-coupled oscillators \cite{lyu2015synchronization}. Namely, let $G=(V,E)$ be a simple graph and an initial 3-coloring $X_{0}:V\rightarrow \mathbb{Z}_{3}$ is given. The 3-color FCA trajectory $(X_{t})_{t\ge 0}$ on $G$ is a discrete-time dynamical process generated by iterating the following transition map
	\begin{equation}\label{FCA_transition_rule}
	(\text{FCA})\qquad X_{t+1}(v)=\begin{cases}
	X_t(v) & \text{if $X_t(v) =2$ and $\exists u\in N(v)$ s.t. $X_{t}(u)=1$}\\
	X_t(v)+1 \text{ (mod $3$)} & \text{otherwise,}\end{cases}
	\end{equation}
	where $N(v)$ denotes the set of all neighbors of $v$ in $G$. (See Section \ref{section:application} for backgrounds and details.) We show that if the discrete oscillators form an infinite array whose colors are initialized at random, then they tend to synchronize locally. More precisely, we obtain the following asymptotic of the probability of local disagreement. 
	
	\begin{customthm}{3}\label{thm:3FCA_Z}
		Let $G=\mathbb{Z}$ be the one-dimensional integer lattice with nearest-neighbor edges. Let $(X_{t})_{t\ge 0}$ be the 3-color FCA trajectory on $\mathbb{Z}$, where $X_{0}$ is a random 3-coloring drawn from the uniform product measure on $(\mathbb{Z}_{3})^{\mathbb{Z}}$. Then for any $x\in \mathbb{Z}$, as $t\rightarrow \infty$, we have 
		\begin{equation}
			\mathbb{P}(X_{t}(x)\ne X_{t}(x+1)) \sim \sqrt{\frac{8}{81}}t^{-1/2}.
		\end{equation}
	\end{customthm}

	  This paper is organized as follows. In Section \ref{section:proof} we prove Theorems \ref{thm:key_relation} and \ref{thm:general_survival_Brownian}. For the special case when the state space $\mathfrak{X}$ is finite, we give an explicit formula for the limiting variance $\gamma_{g}^{2}$ in Proposition \ref{prop:gamma_formula}. Furthermore, when the functional $g$ is integer-valued, we give asymptotic relations of various survival probabilities $\mathtt{Q}^{x}(j,t)$, which allows us to extract their asymptotics from Theorem \ref{thm:general_survival_Brownian}. In Section \ref{section:application}, we apply our results to study asymptotic behavior of three 3-color cellular automata models of excitable media on $\mathbb{Z}$, namely, the cyclic cellular automaton (CCA), the Greenberg-Hastings model (GHM), and the FCA. We readily recover known results for CCA and GHM, and prove Theorem \ref{thm:3FCA_Z} for the FCA.

	\vspace{0.5cm}
	\section{Proof of main results}
	\label{section:proof}
	
	In this section we prove the main results, Theorems \ref{thm:key_relation} and \ref{thm:general_survival_Brownian}. The key thread connecting the two different quantities in Theorem \ref{thm:key_relation} is provided in the following simple lemma. 
	
	\begin{customlemma}{2.1}\label{lemma:key}
		Let $\mathtt{Q}^{\bullet}(r,t)$ and $\cev{M}(t)$ be defined as in~\eqref{def:conditional_survival} and~\eqref{def:backward_maximum}. Then for all $r>0$ and $t\in \mathbb{N}$,
		\begin{equation}\label{eq:lemma_key}
		\mathbb{P}(\cev{M}(t) - \cev{M}(t-1)\ge r ) =  \mathtt{Q}^{\bullet}(-r,t-1).
		\end{equation} 
	\end{customlemma}
	
	\begin{proof}
		Fix $t\ge 1$, and let $\mathtt{S}_{k}=\mathtt{S}_0+\sum_{j=1}^{k} g(\mathtt{X}_{j})$ for all $1\le k \le t$. Consider the time-reversed sequence  $(g(\mathtt{X}_{t}),g(\mathtt{X}_{t-1}),\cdots,g(\mathtt{X}_{1}))$. For $1\le k \le t$, denote by $M_{k}$ the maximum of the first $k$ partial sums of this sequence,
		\begin{equation*}
		\begin{aligned}
		M_{k} &= \max_{1\le j \le k} \left( g(\mathtt{X}_{t})+ \cdots + g(\mathtt{X}_{t-j+1})\right)\\
		&=\max_{1\le j \le k} \left( \mathtt{S}_{t} - \mathtt{S}_{t-j}\right) \\
		 &= \mathtt{S}_{t} - \min_{1\le j \le k} \mathtt{S}_{t-j},
		\end{aligned}
		\end{equation*}
		and note that $M_k$ does not depend on $\mathtt{S}_0$ for any $1\le k\le t$. Then since $r>0$, observe 
		\begin{eqnarray*}
			\left\{ M_{t}-M_{t-1}\ge r \right\} &=& \left\{ \min\{ \mathtt{S}_{1},\mathtt{S}_{2},\cdots,\mathtt{S}_{t-1} \} - \min\{ \mathtt{S}_0,\mathtt{S}_{1},\mathtt{S}_{2},\cdots,\mathtt{S}_{t-1} \} \ge r \right\} \\
			&=& \left\{ \mathtt{S}_{1} \ge r+\mathtt{S}_0,\, \mathtt{S}_{2}\ge r+\mathtt{S}_0,\, \cdots, \mathtt{S}_{t-1} \ge r+\mathtt{S}_0 \right\}.
		\end{eqnarray*}
		If we assume $\mathtt{S}_0=-r$, then we have 
		\begin{equation*}
			\mathbb{P}(  M_{t}-M_{t-1}\ge r ) = \mathtt{Q}^{\bullet}(-r,t-1).
		\end{equation*}
		To finish the proof, note that by the stationarity of $(\mathtt{X}_{t})_{t\in \mathbb{Z}}$, for each fixed $t\ge 1$, the random vectors $(\mathtt{X}_{t},\mathtt{X}_{t-1},\cdots,\mathtt{X}_{1})$ and $(\mathtt{X}_{-1},\mathtt{X}_{-2},\cdots,\mathtt{X}_{-t})$ have the same joint distribution. Thus so do $(M_{1},M_{2},\cdots,M_{t})$ and $(\cev{M}(1),\cev{M}(2),\cdots,\cev{M}(t))$. Hence the assertion follows. 
	\end{proof}
	
	The proof of the above lemma was inspired by analyzing a class of one-dimensional cellular automata and their embedded particle systems in two different perspectives. We discuss more in this direction in Section \ref{section:application}.

	\vspace{0.2cm}
	In what follows, we will make use of a Tauberian theorem for power series, which we state in Theorem \ref{thm:tauberian} below. 
	
	\begin{customthm}{2.2}[Theorem XIII.5.5 in \cite{feller1971introduction} ]\label{thm:tauberian}
		Let $(q_{n})_{n\ge 0}$ be a sequence of positive reals and suppose that 
		\begin{equation*}
		Q(u):=\sum_{n=0}^{\infty} q_{n}u^{n}
		\end{equation*}
		converges for all $u\in [0,1)$. If $L$ varies slowly at infinity and $0\le \rho<\infty$, then each of the two relations 
		\begin{equation*}
		Q(u)\sim (1-u)^{-\rho} L((1-u)^{-1}) \quad \text{as} \quad u\rightarrow 1^{-}
		\end{equation*}
		and 
		\begin{equation*}
		\sum_{k=0}^{n-1}q_{k} \sim \frac{n^{\rho}}{\Gamma(\rho+1)} L(n) \quad \text{as} \quad n\rightarrow \infty
		\end{equation*}
		are equivalent. Furthermore, if $(q_{n})_{n\ge 0}$ is monotone and $0<\rho<\infty$, then the first relation is equivalent to 
		\begin{equation*}
		q_{n} \sim \frac{n^{\rho-1}}{\Gamma(\rho)} L(n) \quad \text{as} \quad n\rightarrow \infty.
		\end{equation*}
	\end{customthm}

	Now we are ready to give a proof of Theorem \ref{thm:key_relation}. 
	
	\vspace{0.1cm}
	\hspace{-0.42cm}\textbf{Proof of Theorem \ref{thm:key_relation}.} Define $\mathtt{e}(t) = \cev{M}(t+1)-\cev{M}(t)$ for $t\ge 0$. Since $\cev{M}(t)$ is non-deceasing in $t$, $\mathtt{e}(t)\ge 0$ for all $t\ge 0$. By integrating both sides of (\ref{eq:lemma_key}) over $r\in (0,\infty)$ with respect to the Lebesgue measure, and using the fact that both integrands are bounded by $1$ at $r=0$, we get 
	\begin{equation*}
		\mathbb{E}(\mathtt{e}(t)) =  \int_{0}^{\infty} \mathbb{P}(\mathtt{e}(t) \ge r) \,dr =\int_{0}^{\infty}\mathtt{Q}^{\bullet}(-r,t)\,dr.  
	\end{equation*}
	Hence it suffices to show the assertion with the integrated survival probability replaced by $\mathbb{E}(\mathtt{e}(t))$. Moreover, note that the above equation implies that $\mathbb{E}(\mathtt{e}(t))$ is non-increasing in $t$, since the integrand $\mathtt{Q}^{\bullet}(-r,t)$ is non-increasing in $t$. 
	
	Now define $G(u)$ and $H(u)$ to be the generating functions for $\mathbb{E}(\mathtt{e}(t))$ and $\mathbb{E}(\cev{M}(t))$ (noting that $\cev{M}(0)=0$):
	\begin{equation}
	G(u) = \sum_{t=0}^{\infty} \mathbb{E}(\mathtt{e}(t)) u^{t}, \qquad H(u) = \sum_{t=1}^{\infty} \mathbb{E}(\cev{M}(t)) u^{t},
	\end{equation}
	where $u\in [0,1)$. By linearity of expectation we have $\mathbb{E}(\cev{M}(t))=\sum_{k=0}^{t-1} \mathbb{E}(\mathtt{e}(k))$, so  
	\begin{equation*}
	H(u) = \sum_{t=1}^{\infty} \sum_{k=0}^{t-1} \mathbb{E}(\mathtt{e}(k))u^{t} = \sum_{k=0}^{\infty} \mathbb{E}(\mathtt{e}(k)) \sum_{t=k+1}^{\infty} u^{t} = u(1-u)^{-1}\sum_{k=0}^{\infty} \EE (\mathtt{e}(k)) u^{k} = u(1-u)^{-1}G(u).
	\end{equation*}
	 Hence it follows that 
	\begin{equation*}
		\lim_{u\rightarrow 1^{-}} (1-u)^{\rho} G(u) = \lim_{u\rightarrow 1^{-}} (1-u)^{1+\rho} H(u),
	\end{equation*}
	whenever one of the two limits exists.
	The assertion follows from the monotonicity of $\mathbb{E}(\cev{M}(t))$ and $\mathbb{E}(\mathtt{e}(t))$, Theorem~\ref{thm:tauberian}, and the fact that $\Gamma(1+\rho)/\Gamma(\rho)=\rho$. $\blacksquare$

	\vspace{0.2cm}
	Now in order to prove Theroem \ref{thm:general_survival_Brownian}, it suffices to compute the desired asymptotic of $\mathbb{E}(\cev{M}(t))$. This can be achieved easily by a functional central limit theorem for Markov chains.
	
	\vspace{0.1cm}
	\hspace{-0.42cm}\textbf{Proof of Theorem \ref{thm:general_survival_Brownian}.} By assumption, the backward chain $(\mathtt{X}_{-t})_{t\ge 0}$ is strictly stationary and ergodic.
	Moreover, the associated partial sums process $\cev{\mathtt{S}}_{n}$ also has the same limiting variance $\gamma_{g}^{2}$ as the forward process, which is assumed to be finite. Hence by a functional CLT for Markov chains (Corollary 3 of \cite{dedecker2000functional}), we have 
	\begin{equation}
	\frac{\cev{M}(t)}{\gamma_{g}\sqrt{t}} \overset{d}{\longrightarrow} \max_{0\le s \le 1} B_{s} \quad \text{as $t\rightarrow \infty$.}
	\end{equation}
	Moreover, Proposition 1 of \cite{dedecker2000functional} implies that the sequence $(t^{-1/2}\cev{M}(t))_{t\ge 1}$ is uniformly integrable. Hence we get  
	\begin{equation}
	\lim_{t\rightarrow \infty} \frac{\mathbb{E}(\cev{M}(t))}{\gamma_{g}\sqrt{t}} = \int_{0}^{\infty}\mathbb{P}(|Z|>r)\,dr = \sqrt{\frac{2}{\pi}},
	\end{equation}
	where $Z\sim N(0,1)$ is the standard normal random variable. This shows the desired asymptotic for $\mathbb{E}(\cev{M}(t))$.  $\blacksquare$

	\begin{customremark}{2.3}(Skip-free walks) An integer-valued random walk $\mathtt{S}_{n}$ is called \textit{upward} (resp., \textit{downward}) \textit{skip-free} if the only positive (resp., negative) value that its increment can take is $1$ (resp., $-1$). Downward skip-free walks with i.i.d.\ increments naturally arise in the context of M/G/1 queue at departure times of customers (see \cite{latouche1999introduction}, p. 267), and also in the breadth-first-search process of branching processes \cite{van2009random}. The most relevant discussion about survival probabilities of (downward) skip-free random walks with i.i.d. increments can be found in Aurzada and Simon \cite{aurzada2015persistence}. The discussion is based on the following so-called random walk hitting time theorem (see e.g., \cite{van2008elementary}). If the increments take values from $\{-1,0,1,2,\cdots\}$, then this theorem states that 
		\begin{equation}\label{eq:hittingtimethm}
		\mathtt{Q}^{\bullet}(k,t) = \frac{k+1}{t} \mathbb{P}(\mathtt{S}_{t}=k+1).
		\end{equation}
	Then a local limit theorem gives the asymptotic of $\mathbb{P}(\mathtt{S}_{t}=k+1)$, and hence that of $\mathtt{Q}^{\bullet}(k,t)$. On the other hand, our Theorem \ref{thm:general_survival_Brownian} is well-suited to handle the case of upward skip-free walks, with increments taking values in $\{\cdots, -2, -1, 0, 1\}$. In this case, we have
	\begin{equation*}
		\int_{0}^{\infty} \mathtt{Q}^{\bullet}(-r,t) \,dr = \mathtt{Q}^{\bullet}(-1,t),
	\end{equation*}
	since the only positive step is $1$, and all steps are integer valued. Therefore, Theorem \ref{thm:general_survival_Brownian} gives the precise asysmptotic of $\mathtt{Q}^{\bullet}(-1,t)$ for upward skip-free walks.
	\end{customremark}

	
	\vspace{0.5cm}
	\section{Computation and examples}

	Note that Theorem \ref{thm:general_survival_Brownian} applies whenever the state space $\mathfrak{X}$ is finite and the Markov chain $(\mathtt{X}_t)_{t\in \Z}$ is stationary and ergodic. We assume this throughout this section and provide an explicit formula for the limiting variance $\gamma_{g}^{2}$ as well as some examples. Before we get to the general case, we first give some preliminary examples to illustrate computation for the general case.

	\begin{customexample}{3.1}(Persistent random walks.)
		Take $\mathfrak{X}=\{-1,1\}$ and let $(\mathtt{X}_{n})_{n\ge 0}$ be a Markov chain on $\mathfrak{X}$ with transition matrix $P=(a_{ij})$ and stationary measure $\pi$. Let $g:\mathfrak{X}\rightarrow \mathbb{Z}$ be the identity map. Since we require $g(\mathtt{X}_{0})$ has mean zero under the stationary measure, we need to have $\pi=(1/2,1/2)$. This implies $P$ is of the form 
		\begin{equation*}
		P=\begin{bmatrix}
		\alpha & \beta\\
		\beta & \alpha
		\end{bmatrix}
		=
		\begin{bmatrix}
		1 & 1\\
		0 & -1
		\end{bmatrix}
		\begin{bmatrix}
			1 & 0\\
			0 & \alpha-\beta
		\end{bmatrix}
		\frac{1}{2}
		\begin{bmatrix}
		1 & 1\\
		0 & -1
		\end{bmatrix}
		\end{equation*} 
		for some $0\le \alpha,\beta\le 1$ with $\alpha+\beta=1$. The resulting random walk $(\mathtt{S}_{n})_{n\ge 0}$ is called a \textit{persistent} (resp., \textit{anti-persistent}) random walk if $\alpha>\beta$ (resp. $\alpha<\beta$). In words, its next increments keep the same sign with probability $\alpha$ and flips with probability $\beta$. This process is related to the telegrapher's equation and is first introduced by Kac \cite{kac1974stochastic}. A special case is when $\alpha=\beta$, in which case $(\mathtt{S}_{n})_{n\ge 0}$ is the simple symmetric random walk. 
		
		A direct computation shows  
		\begin{equation*}
		\text{Cov}(\mathtt{X}_{0}\mathtt{X}_{k})=\mathbb{E}(\mathtt{X}_{0}\mathtt{X}_{k}) = (\alpha-\beta)^{k} \quad \forall k\ge 0,
		\end{equation*}
		so we have 
		\begin{equation*}
		\gamma_{g}^{2} = 1+ 2 \frac{\alpha-\beta}{1-(\alpha-\beta)} = \alpha/\beta.
		\end{equation*}
		Hence Theorem \ref{thm:general_survival_Brownian} gives 
		\begin{equation*}
		\mathtt{Q}^{\bullet}(-1,t)\sim \sqrt{\frac{\alpha}{2\pi \beta}} t^{-1/2}.
		\end{equation*}
		Noting that $\mathtt{Q}^{\bullet}(-1,t)=(1/2)\mathtt{Q}^{1}(0,t-1)$, this yields 
		\begin{equation*}
		\mathtt{Q}^{1}(0,t)\sim \sqrt{\frac{2\alpha}{\pi \beta}} t^{-1/2},
		\end{equation*}
		which agrees with (\ref{eq:SRW_asympotitc}) in the special case of $\alpha=\beta$. $\blacktriangle$
	\end{customexample}

	\begin{customexample}{3.2}(Markovian increments on $\mathfrak{X}=\{-1,0,1\}$.) Let $\mathfrak{X}=\{-1,0,1\}$ and let $(\mathtt{X}_{n})_{n\ge 0}$ be a Markov chain on $\mathfrak{X}$ with transition matrix $P=(a_{ij})$ and stationary measure $\pi$. Let $g:\mathfrak{X}\rightarrow \mathbb{Z}$ be the identity map. It is easy to see that $\pi=(a,b,a)$ for some $0\le a, b \le 1$ (or equivalently $\mathbb{E}(g(\mathtt{X}_{0}))=0$) if and only if 
		\begin{equation}
		\frac{b}{a}=\frac{1-a_{11}-a_{31}}{a_{21}} = \frac{1-a_{13}-a_{33}}{a_{23}} = \frac{a_{21}+a_{23}}{1-a_{22}}.
		\end{equation}
		In order to compute the limiting variance $\gamma_{g}^{2}$, observe that 
		\begin{eqnarray*}
			\mathbb{E}(\mathtt{X}_{0}\mathtt{X}_{k}) &=& a( [P^{k}]_{11}+[P^{k}]_{33}-[P^{k}]_{13}-[P^{k}]_{31}) \\
			&=& a  \begin{bmatrix}
				1 & 0 & -1
			\end{bmatrix} 
			P^{k}
			\begin{bmatrix}
				1 & 0 & -1
			\end{bmatrix}^{T}
		\end{eqnarray*}
		for all $k\ge 1$. To simplify the expression, we write $P=UAU^{-1}$, where $A$ is the Jordan normal form of $P$ and $U$ is the invertible matrix of a Jordan basis. There are following two possibilities
		\begin{equation}
		P = U 
		\begin{bmatrix}
		1 & 0 & 0\\
		0 & \lambda_{2} & 0 \\
		0 & 0 & \lambda_{3} 
		\end{bmatrix}
		U^{-1}
		\quad \text{or} \quad 
		U 
		\begin{bmatrix}
		1 & 0 & 0\\
		0 & \lambda_{2} & 1 \\
		0 & 0 & \lambda_{2} 
		\end{bmatrix}
		U^{-1}.
		\end{equation} 	
		Interesting cases are when $|\lambda_{2}|,|\lambda_{3}|<1$, which we may assume in this example. Write $U=(v_{ij})$ and $U^{-1}=(u_{ij})$. Note that $(v_{11},v_{21},v_{31})=(1,1,1)$. So in the first case when $A$ is a diagonal matrix,  we get 
		\begin{eqnarray*}
			\mathbb{E}(\mathtt{X}_{0}\mathtt{X}_{k}) =  a\sum_{j=2}^{3} \lambda_{j}^{k}(v_{j1}-v_{j3})(u_{j1}-u_{j3})
		\end{eqnarray*}
		for all $k\ge 1$. So we obtain 
		\begin{equation*}
		\gamma_{g}^{2} = 2a + 2a\sum_{j=2}^{3} \frac{\lambda_{j}}{1-\lambda_{j}}(v_{j1}-v_{3j})(u_{j1}-u_{3j}).
		\end{equation*}
		
		In the latter case, similar computation shows 
		\begin{eqnarray*}
			\mathbb{E}(\mathtt{X}_{0}\mathtt{X}_{k}) = a\left( \sum_{j=2}^{3} \lambda_{2}^{k}(v_{j1}-v_{3j})(u_{j1}-u_{3j}) \right)+ ak\lambda_{2}^{k-1}(v_{21}-v_{32})(u_{31}-u_{33}),
		\end{eqnarray*}
		so 
		\begin{equation*}
		\gamma_{g}^{2} = 2a + 2a\sum_{j=2}^{3} \frac{\lambda_{2}}{1-\lambda_{2}}(v_{j1}-v_{3j})(u_{j1}-u_{3j})+a(1-\lambda_{2})^{-2}(v_{21}-v_{32})(u_{31}-u_{33}).
		\end{equation*}

		For a concrete example, consider 
		\begin{equation*}
		P = \begin{bmatrix}
		2/3 & 0 & 1/3 \\
		1/6 & 1/2 & 1/3 \\
		1/4 & 1/4 & 1/2 
		\end{bmatrix}
		=
		\begin{bmatrix}
		1 & -2 & -2/3 \\
		1 & 2 & -2/3 \\
		1 & 1 & 1 
		\end{bmatrix}
		\begin{bmatrix}
		1 & 0 & 0 \\
		0 & 1/2 & 0 \\
		0 & 0 & 1/6 
		\end{bmatrix}
		\begin{bmatrix}
		1 & -2 & -2/3 \\
		1 & 2 & -2/3 \\
		1 & 1 & 1 
		\end{bmatrix}^{-1}.
		\end{equation*}
		Then $\pi=(1/3,1/3,1/3)$, and the above formula gives  
		\begin{equation*}
		\mathbb{E}(\mathtt{X}_{0}\mathtt{X}_{k}) = \frac{1}{3} \left(  \frac{1}{2^{k}} + \frac{2}{5 \cdot 6^{k}}  \right) \quad \forall k\ge 1,
		\end{equation*}
		so we obtain 
		\begin{equation*}
		\gamma_{g}^{2} = 101/75.
		\end{equation*}
		Thus Theorem \ref{thm:general_survival_Brownian} gives 
		\begin{equation*}
		\mathtt{Q}^{1}(-1,t)\sim \sqrt{\frac{101}{150 \pi }} t^{-1/2}.
		\end{equation*}
		$\blacktriangle$
	\end{customexample}
	
	Now we compute the limiting variance $\gamma_{g}^{2}$ in the general case when the state space $\mathfrak{X}$ is finite. Let $\mathfrak{X}=\{x_{1},\cdots,x_{n}\}$ for some $n\in \mathbb{N}$, and let $P$ be the transition matrix with stationary distribution $\pi=(p_{1},\cdots,p_{n})$. We may write $P=UAU^{-1}$ where $A$ is a Jordan normal form of $P$ and $U=(u_{ij})$ is an invertible matrix of a Jordan basis. Let $m$ be the number of distinct Jordan blocks of $A$, and for each $1\le i \le m$, write its $i^{\text{th}}$ Jordan block $J^{ii}$ by $J^{ii}=\lambda_{i}I_{i}+N_{i}$, where $I_{i}$ is the identity matrix of dimension $m_{i}\ge 1$ and $N_{i}$ is a square matrix of the same dimension whose entries are all zeros but 1's just above the diagonal. We write $A$ into a block form $A=(J^{ij})$ for $1\le i,j\le m$, where $J^{ij}=O$ with the appropriate dimension whenever $i\ne j$. With the same partitioning into blocks, we may write $U=(U^{ij})$ and $V=(V^{ij})$. Let $G=[g(x_{1}) \cdots g(x_{n})]^{T}$ and $H=[p_{1}g(x_{1}) \cdots p_{n}g(x_{n})]$, and write these matrices into the corresponding block form $G=(G^{j1})$ and $H=(H^{1i})$. Finally, for each $1\le i \le m$ with $|\lambda_{i}|<1$, define 
	\begin{equation}
	\bar{J}^{ii} = \left( \lambda_{i}(1-\lambda_{i})^{-1}I_{i}+ \sum_{\ell=1}^{m_{i}-1} (-1)^{\ell} (1-\lambda_{i})^{-\ell-1} N_{i}^{\ell} \right).
	\end{equation}

	\begin{customprop}{3.3}\label{prop:gamma_formula}
		With the notation given above, we have 
		\begin{equation}\label{eq:gamma_g_nMC_general}
		\gamma_{g}^{2}
		= \sum_{i=1}^{n} p_{i}g(x_{i})^{2}+ 2\sum_{\substack{1\le i,k\le m \\ |\lambda_{j}|< 1}} H^{1i} U^{ij}\bar{J}^{jj} V^{jk} G^{k1}.
		\end{equation}
		In particular, if $A=\text{diag}(1,\lambda_{2},\cdots,\lambda_{n})$, then
		\begin{equation}\label{eq:gamma_g_nMC}
		\gamma_{g}^{2}
		= \sum_{i=1}^{n} p_{i}g(x_{i})^{2} + 2 \sum_{\substack{1\le j \le n\\ |\lambda_{j}|< 1}} \frac{\lambda_{j}}{1-\lambda_{j}} \sum_{1\le i,k\le n} p_{j}g(x_{i})g(x_{k})u_{ij}v_{jk}.
		\end{equation}
	\end{customprop}
	
	\begin{proof}
		The second part of the assertion follows immediately from the first part. To show the first pat, we begin by noting that for each $k\ge 1$, 
		\begin{eqnarray*}
			\mathbb{E}(g(\mathtt{X}_{0})g(\mathtt{X}_{k})) &=& \sum_{1\le i,j\le n} p_{i}g(x_{i}) [P^{k}]_{ij} g(x_{j})   \\
			&=& H U A^{k} U^{-1} G \\
			&=& \sum_{\substack{1\le p,q,r\le n\\ |\lambda_{q}|<1}}  H^{1p} U^{pq} (\lambda_{q}I_{q}+N_{q})^{k} V^{qr} G^{r1}\\
			 && \qquad \qquad + \sum_{\substack{1\le p,q,r\le n\\ |\lambda_{q}|=1}}  H^{1p} U^{pq} (\lambda_{q}I_{q}+N_{q})^{k} V^{qr} G^{r1}\\
		\end{eqnarray*}
	For $1\le q \le m$ such that $|\lambda_{q}|<1$, we have 
	\begin{equation*}
	\sum_{k\ge 1}   (\lambda_{q}I_{q}+N_{q})^{k} = \sum_{k\ge 1}   \lambda_{q}^{k}I_{q}+\binom{k}{1}\lambda_{q}^{k-1} N_{q} + \cdots + \binom{k}{m_{q}-1}\lambda_{q}^{k-m_{q}+1} N_{q}^{m_{q}-1},
	\end{equation*}
	where we take $\binom{a}{b}=0$ whenever $a<b$. It is then easy to see that the above geometric series of matrices converges to $\bar{J}^{qq}$. Hence by linearity it suffices to show that the contribution of the second summation above to $\gamma_{g}^{2}$ is zero. 
		
		To this end, we first note that any eigenvalue $\lambda_{j}$ of $P$ of modulus 1 has equal algebraic and geometric multiplicity \cite{ding2011equality}, so the corresponding Jordan block $J_{j}$ is of size 1. For each $1\le j \le n$ such that $\lambda_{j}=1$, since $u_{ij}=u_{kj}$ for all $1\le i,j\le n$ and $\mathbb{E}(g(\mathtt{X}_{0}))=p_{1}g(x_{1})+\cdots+p_{n}g(x_{n})=0$, there is no contribution to the above summation whenever $\lambda_{j}=1$. Hence if we denote by $\{\mu_{1},\cdots,\mu_{n^{*}}\}$ the set of all distinct eigenvalues of $P$ of modulus 1 but not equal to 1,  
		\begin{equation*}
		\sum_{k=1}^{N} \,	\sum_{\substack{1\le p ,q,r\le n\\ |\lambda_{q}|=1}}  H^{1p} U^{pq} (\lambda_{q}I_{q}+N_{q})^{k} V^{qr} G^{r1} = \sum_{1\le j \le n^{*}}C(\mu_{i})\frac{1-\mu_{i}^{N}}{1-\mu_{r}}
		\end{equation*}
		where $C(\mu_{i})$ is a constant that does not depend on $N$. Since this summation must converge as $N\rightarrow \infty$, we must have that $C(\mu_{i})=0$ for all $1\le i \le n^{*}$. This shows the assertion.

	\end{proof}

	\vspace{0.3cm}
	Next, we establish linear relationships between the survival probabilities $\mathtt{Q}^{x}(j,t)$ for different values of $x$ and $j$ in the case when the state space $\mathfrak{X}$ is finite and the functional $g$ assumes integer values. This will allow us to determine the asymptotics of all survival probabilities $\mathtt{Q}^{x}(j,t)$ from Theorem \ref{thm:general_survival_Brownian}. By their monotonicity and Tauberian theorem, we instead work with their generating functions 
	\begin{equation}
	\tilde{\mathtt{Q}}^{x}_{j}(u):=\sum_{t\ge 0} u^{t} \mathtt{Q}^{x}(j,t),\quad \tilde{\mathtt{Q}}^{\bullet}_{j}(u):=\sum_{t\ge 0} u^{t} \mathtt{Q}^{\bullet}(j,t),
	\end{equation}
	where $u\in [0,1)$. Let $P$ be the transition probability matrix for the Markov chain $(\mathtt{X}_t)_{t\ge 0}$ on $\mathfrak{X}$. For each $j\ge i\ge0$ and $x\in \mathfrak{X}$, we define 
	\begin{equation*}
	\tau^{x}_{j}(i) = \inf \left\{ t\ge 0 \,:\, \mathtt{S}_{t}=i, \mathtt{S}_{0}=j, \, \mathtt{X}_{0}=x   \right\}
	\end{equation*}
	to be the first time that $\mathtt{S}_{t}$ hits level $i\le j=\mathtt{S}_{0}$ when $\mathtt{X}_{0}=x$. Also for each $y\in \mathfrak{X}$, let 
	\begin{equation}
	p^{x}_{j,i}(y) = \mathbb{P}\left(  \mathtt{X}_{\tau^{x}_{j}(i)}=y\,\bigg|\, \mathtt{S}_{0}=j,\, \mathtt{X}_{0}=x \right),\quad  p^{x}_{j}(y) = \sum_{i=0}^{j}p^{x}_{j,i}(y),
	\end{equation} 
	and 
	\begin{equation}
	A_{j}(x,y) = \sum_{z\in \mathfrak{X}} \sum_{i=0}^{g(z)} \1\{j+g(z)\ge 0\} P(x,z) p^{z}_{j+g(y),i}(y).
	\end{equation}

	\begin{customprop}{3.4}\label{prop:coefficient}
		For each $x\in \mathfrak{X}$, $j\ge 0$, and $u\in[0,1)$, we have 
		\begin{eqnarray*}
			\tilde{\mathtt{Q}}^{x}_{j}(u)  &\sim &  \sum_{y\in \mathfrak{X}} \1\{ j+g(y)\ge 0 \} P(x,y) \tilde{\mathtt{Q}}^{y}_{j+g(y)}(u), \quad \text{as $u\rightarrow 1^{-}$} \\
			\tilde{\mathtt{Q}}^{x}_{j}(u) &\sim& \sum_{y\in \mathfrak{X}}  p^{x}_{j}(y) \tilde{\mathtt{Q}}_{0}^{y}(u) \quad \text{as $u\rightarrow 1^{-}$}, \\
			\tilde{\mathtt{Q}}^{x}_{0}(u) &\sim & \sum_{y\in \mathfrak{X}} A_{0}(x,y) \tilde{\mathtt{Q}}^{y}_{0}(u) \quad \text{as $u\rightarrow 1^{-}$},
		\end{eqnarray*}
		provided at least one of the quantities in each asymptotic expression diverges to $\infty$ as $u\to~1^-$.
	\end{customprop}
	
	\begin{proof}
		The last asymptotic in the assertion follows by combining the first two. In order to show the first recursion,  we partition on the first step to obtain `forward' recursions. Namely,   
		\begin{equation*}
		\mathtt{Q}^{x}(j,t) = \sum_{y\in \mathfrak{X}} \1\{ j+g(y)\ge 0 \} P(x,y) \mathtt{Q}^{y}(j+g(y),t-1).
		\end{equation*}
		Hence multiplying by $u^{t}$, summing over all $t\ge 1$ and letting $u\to 1^-$ one gets the first recursion in the assertion. 
		
		The second asymptotic in the assertion follows from the `backward' recursions, which can be obtained by partitioning on the value of the minimum of $\mathtt{S}_{s}$ during $s\in [0,t]$,
		\begin{equation*}
		M^{x}_{j}(t):=\min_{0\le k \le t}\{\mathtt{S}_{k}\,|\,\mathtt{S}_{0}=j,\, \mathtt{X}_{0}=x\},
		\end{equation*}
		and on the time of the last visit to this height by time $t$.
		 For each $i \le j$, $0\le s \le t$, and $x,y\in \mathfrak{X}$, let 
		\begin{equation*}
		a^{x}_{j}(i,s,y) = \mathbb{P}\left( \mathtt{S}_{0}> i,\mathtt{S}_{1}> i,\,\cdots, \mathtt{S}_{s-1}> i,\, \mathtt{S}_{s}= i, \, \mathtt{X}_{s}=y \,|\, \mathtt{S}_{0}=j,\, \mathtt{X}_{0}=x \right)
		\end{equation*}	
		be the probability that the random walk hits height $i$ for the first time at time $s$, and at that time the Markov chain is in state $X_s = y$.
		Note that for each $ i \le j$ and $x,y\in \mathfrak{X}$, 
		\begin{eqnarray}\label{eq:coefficient}
		\lim_{u\rightarrow 1^{-}}\sum_{s=0}^{\infty}a^{x}_{j}(i,s,y)u^{s} =\sum_{s=0}^{\infty}a^{x}_{j}(i,s,y)=p^{x}_{j,i}(y). 
		\end{eqnarray}

		By the Markov property of $(\mathtt{X}_{t})_{t\ge 0}$,
		\begin{equation*}
		\begin{aligned}
			\mathtt{Q}^{x}(j,t) &= \mathbb{P}(M^{x}_{j}(t)\ge 0) = \sum_{i=0}^j \mathbb{P}(M^{x}_{j}(t)= i)\\
			&= \sum_{i=0}^j \sum_{s=0}^t \sum_{y\in\mathfrak{X}} \mathbb{P}(M^{x}_{j}(t)= i, \tau^{x}_{j}(i)=s, \mathtt{X}_s = y)\\
			&= \sum_{i=0}^{j}\sum_{s=0}^{t}\sum_{y\in \mathfrak{X}} a^{x}_{j}(i,s,y) \mathtt{Q}^{y}(0,t-s).
		\end{aligned}
		\end{equation*}
		We then multiply by $u^{t}$ and sum over all $t\ge 0$. Using the stationarity of $(\mathtt{X}_{t})_{t\ge 0}$ and Fubini's theorem we obtain 
		\begin{eqnarray*}
			\tilde{\mathtt{Q}}^{x}_{j}(u) 
			&=& \sum_{i=0}^{j}\sum_{t=0}^{\infty}\sum_{s=0}^{t}\sum_{y\in \mathfrak{X}} a^{x}_{j}(i,s,y)u^{t} \mathtt{Q}^{y}(0,t-s)\\
			&=& \sum_{y\in \mathfrak{X}}\sum_{i=0}^{j}\sum_{s=0}^{\infty}  a^{x}_{j}(i,s,y)u^{s}\sum_{t=s}^{\infty} u^{t-s} \mathtt{Q}^{y}(0,t-s)\\
			&=& \sum_{y\in \mathfrak{X}}\sum_{i=0}^{j}\sum_{s=0}^{\infty}  a^{x}_{j}(i,s,y)u^{s}\tilde{\mathtt{Q}}^{y}_{0}(u)\\
			&=& \sum_{y\in \mathfrak{X}}\tilde{\mathtt{Q}}^{y}_{0}(u) \sum_{i=0}^{j}\sum_{s=0}^{\infty}  a^{x}_{j}(i,s,y)u^{s}.
		\end{eqnarray*}
		Hence the second asymptotic in the assertion follows from (\ref{eq:coefficient}).
	\end{proof}

  We illustrate a use of Proposition \ref{prop:coefficient} in the following example.

  \begin{customexample}{3.5}(Non-unit increments)
  	Let $X_{0}:\mathbb{Z}\rightarrow \mathbb{Z}_{3}$ be a random 3-coloring on $\mathbb{Z}$ drawn from the uniform product measure on $(\mathbb{Z}_{3})^{\mathbb{Z}}$. Consider the stationary Markov chain $(\mathtt{X}_{n})_{n\in \mathbb{Z}}$ of color triples, 
  	\begin{equation*}
	  	\mathtt{X}_{n} = (X_{0}(n),X_{0}(n+1),X_{0}(n+2))\in (\mathbb{Z}_{3})^{3}.
  	\end{equation*} 
  	Define a functional $g:(\mathbb{Z}_{3})^{3}\rightarrow [-2,2]$ by 
  	 \begin{eqnarray}
  	 g(i,j,k) = 
  	 \begin{cases}
  	 -2 & \text{if $(i,j,k)=(1,2,0)$}\\
  	 2 & \text{if $(i,j,k)=(0,2,1)$}\\
  	 k-j\in [-1,1] \mod 3& \text{otherwise}. 
  	 \end{cases}
  	 \end{eqnarray}   
  	 Let $(\mathtt{S}_{t})_{t\ge 0}$ be the random walk given by $\mathtt{S}_{t+1}=S_{t}+g(\mathtt{X}_{t})$ and initial condition $\mathtt{X}_{0}=(i,j,k)\in (\mathbb{Z}_{3})^{3}$ and $\mathtt{S}_{0}=\ell\in \mathbb{Z}$. 
  	 
  	 To compute its limiting variance $\gamma_{g}^{2}$, we note that the $27\times 27$ transition matrix $P$ for the chain $(\mathtt{X}_{n})_{n\in \mathbb{Z}}$ is given by 
  	 \begin{equation}
	  	 P((i,j,k),(i',j',k')) = \begin{cases}
		  	 1/3 & \text{if $(j,k)=(i',j')$} \\
		  	 0 & \text{otherwise}
	  	 \end{cases}
  	 \end{equation} 
  	 and its stationary distribution is the uniform probability measure on $(\mathbb{Z}_{3})^{3}$. Hence $g(\mathtt{X}_{0})$ and $g(\mathtt{X}_{k})$ are independent for all $k\ge 3$. A direct calculation yields 
  	 \begin{equation}
	  	 \gamma_{g}^{2} = \frac{2}{729} + 2\left( \frac{2}{729} + \frac{2}{729}  \right) = \frac{10}{729}.
  	 \end{equation}
  	 Hence by Theorem \ref{thm:general_survival_Brownian}, we get 
  	 \begin{equation}
	  	 \mathtt{Q}^{\bullet}(-1,t)+\mathtt{Q}^{\bullet}(-2,t) \sim \frac{1}{27} \sqrt{\frac{5}{\pi}} t^{-1/2}.
  	 \end{equation}
  	 
  By a first step analysis, we can write 
  \begin{equation}
  \tilde{\mathtt{Q}}^{\bullet}_{-2}(u) \sim \frac{1}{27} \tilde{\mathtt{Q}}^{021}_{0}(u)
  \end{equation}
  and similarly 
  \begin{equation}
  \tilde{\mathtt{Q}}^{\bullet}_{-1}(u) 	  \sim\frac{1}{27} \tilde{\mathtt{Q}}^{021}_{1}(u) + \frac{1}{9} \tilde{\mathtt{Q}}^{*01}_{0}(u)  +\frac{1}{9} \tilde{\mathtt{Q}}^{*12}_{0}(u)   + \frac{1}{27} \tilde{\mathtt{Q}}^{020}_{0}(u) + \frac{1}{27} \tilde{\mathtt{Q}}^{220}_{0}(u), 
  \end{equation} 
  where $*$ denotes an arbitrary element of $\mathbb{Z}_{3}$. In particular, it follows that the right hand sides of the last two asymptotic expressions diverge as $u\to 1^-$.

  Next, writing down the first asymptotic relation in Proposition \ref{prop:coefficient} for $x=(i,j,k)\in (\mathbb{Z}_{3})^{3}$, one notices that $\tilde{Q}^{ijk}_{r}(u)\sim \tilde{Q}^{i'j'k'}_{r}(u)$ if $k=k'\in \{0,1\}$ or $k=k'=2$ and $j=j'$. Applying the first relation in Proposition \ref{prop:coefficient} with $r\in \{0,1\}$, we find 
  \begin{eqnarray}\label{recursion_1'}
  \begin{cases}
  3\tilde{\mathtt{Q}}^{ ** 0}_{0}(u) \sim \tilde{\mathtt{Q}}^{ ** 0}_{0}(u)+\tilde{\mathtt{Q}}^{ ** 1}_{1}(u)\\
  3\tilde{\mathtt{Q}}^{ ** 1}_{0}(u)  \sim \tilde{\mathtt{Q}}^{ ** 1}_{0}(u)+\tilde{\mathtt{Q}}^{ *12}_{1}(u) \\
  3\tilde{\mathtt{Q}}^{ *02}_{0}(u) \sim \tilde{\mathtt{Q}}^{ ** 0}_{1}(u)+\tilde{\mathtt{Q}}^{ ** 1}_{2}(u)+\tilde{\mathtt{Q}}^{ *22}_{0}(u) \\
  3\tilde{\mathtt{Q}}^{ *12}_{0}(u) \sim \tilde{\mathtt{Q}}^{ *22}_{0}(u) \\
  3\tilde{\mathtt{Q}}^{ *22}_{0}(u) \sim \tilde{\mathtt{Q}}^{ ** 0}_{1}(u)+\tilde{\mathtt{Q}}^{ *22}_{0}(u) \\
  3\tilde{\mathtt{Q}}^{ **0}_{1}(u) \sim \tilde{\mathtt{Q}}^{ ** 0}_{1}(u)+\tilde{\mathtt{Q}}^{ ** 1}_{2}(u)+\tilde{\mathtt{Q}}^{ *02}_{0}(u)\\
  3\tilde{\mathtt{Q}}^{ *22}_{1}(u) \sim \tilde{\mathtt{Q}}^{ ** 0}_{1}(u)+\tilde{\mathtt{Q}}^{ ** 1}_{0}(u)+\tilde{\mathtt{Q}}^{ *22}_{1}(u).
  \end{cases}
  \end{eqnarray}
  On the other hand, the second asymptotic relation in Proposition \ref{prop:coefficient} gives 
 \begin{equation}
 \begin{cases}
 \tilde{\mathtt{Q}}^{** 1}_{1}(u)\sim \tilde{\mathtt{Q}}^{** 1}_{0}(u)+\tilde{\mathtt{Q}}^{** 0}_{0}(u)\\
 \tilde{\mathtt{Q}}^{ *12}_{1}(u)\sim \tilde{\mathtt{Q}}^{ *12}_{0}(u)+\tilde{\mathtt{Q}}^{** 1}_{0}(u).
 \end{cases}
 \end{equation}
 Combining, these give us 
 \begin{equation}
	\frac{1}{2} \tilde{\mathtt{Q}}^{**1}_{1}(u) \sim \tilde{\mathtt{Q}}^{** 1}_{0}(u)\sim \tilde{\mathtt{Q}}^{**0}_{0}(u)\sim \tilde{\mathtt{Q}}^{*12}_{0}(u).
 \end{equation}
 Hence 
 \begin{equation}
	 \mathtt{Q}^{\bullet}(-1,t)+\mathtt{Q}^{\bullet}(-2,t) \sim \left(\frac{4}{27} + \frac{2}{27} + \frac{1}{9} + \frac{2}{27}\right)\mathtt{Q}^{**1}(0,t),
 \end{equation}
 so we obtain 
 \begin{equation}
	 \mathtt{Q}^{**1}(0,t)\sim \sqrt{\frac{5}{121\pi}} t^{-1/2}.
 \end{equation}
 One can determine the asymptotics of other survival probabilities in a similar way. $\blacktriangle$
  \end{customexample}

  \vspace{0.5cm}
  \section{Application: The 3-color excitable media on $\mathbb{Z}$}
  \label{section:application}
  
In this section, we apply our theory of survival probabilities for functionals of Markovian increments to analyze clustering behavior in three cellular automata models for excitable media on the  one dimensional integer lattice $\mathbb{Z}$: the \textit{cyclic cellular automaton} (CCA), the \textit{Greenberg-Hastings model} (GHM), and the \textit{firefly cellular automaton} (FCA). In general, given a simple graph $G=(V,E)$ and an integer parameter $\kappa\ge 3$ as the number of available colors per site, they are discrete-time dynamical systems whose trajectories are given by a sequence of $\kappa$-colorings $(X_{t})_{t\ge 0}$, $X_{t}:V\rightarrow \mathbb{Z}_{\kappa}$, with a deterministic and parallel update rule $X_{t}\mapsto X_{t+1}$. 

 \begin{figure*}[h]
	\centering
	\includegraphics[width=1 \linewidth]{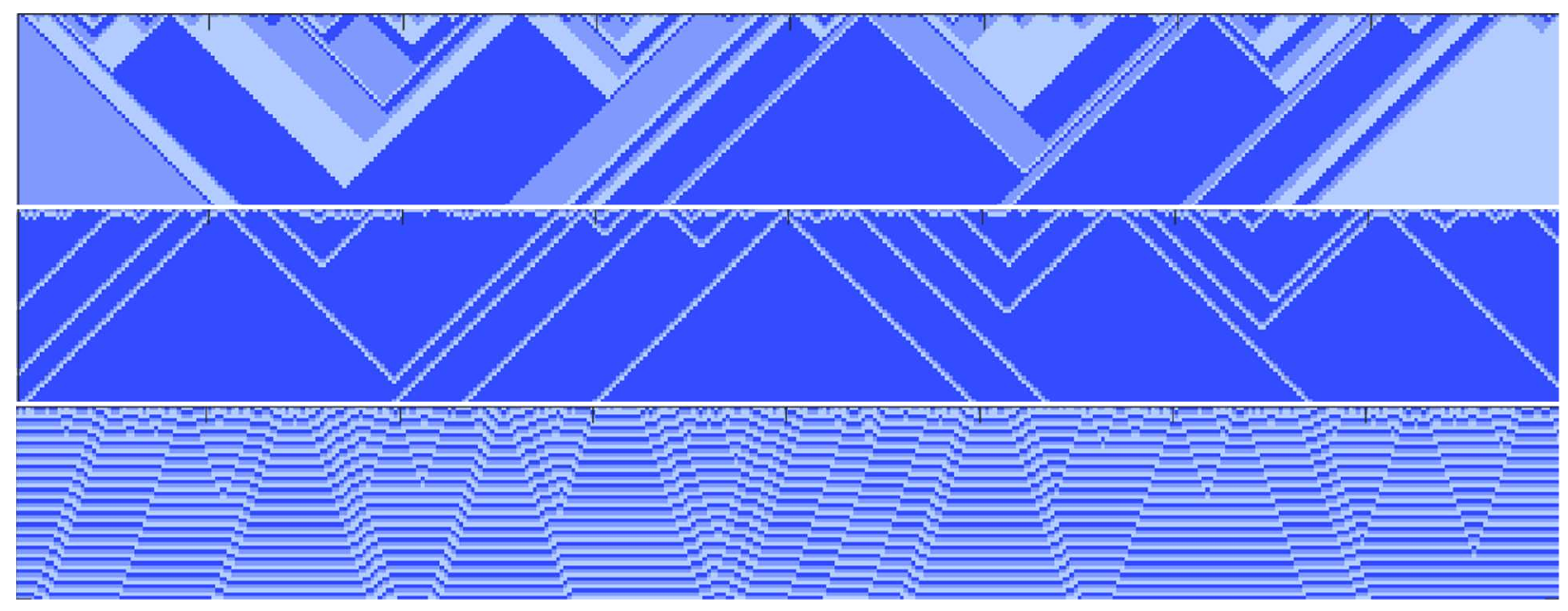}
	\vspace{-0.3cm}
	\caption{ Simulation of $3$-color CCA (top), GHM (middle), and FCA (bottom) on a path of 400 nodes for 50 iterations. The top rows are a random $3$-color initial coloring, and a single iteration of the corresponding transition map generates each successive row from top to bottom. Dark blue=0, blue=1, and light blue=2.
	}
	\label{3FCA_ex}
\end{figure*}

In case of $\kappa=3$ and $G=\mathbb{Z}$, all three models mentioned above share similar annihilating embedded particle system structures, which allow us to connect their limiting behavior to the survival of an associated random walk (see Figure \ref{3FCA_ex}). More precisely, when the initial 3-coloring $X_{0}$ is chosen at random from the uniform product measure on $(\mathbb{Z}_{3})^{\mathbb{Z}}$, for each model, there exists a constant $C>0$ such that for all $x\in \mathbb{Z}$, 
   \vspace{0.1cm}
  \begin{equation}\label{eq:excitable_asymptotics}
	  \mathbb{P}(X_{t}(x)\ne X_{t}(x+1)) \sim C t^{-1/2}.
	  \vspace{0.1cm}
  \end{equation}
  Fisch \cite{fisch1992clustering} proved \eqref{eq:excitable_asymptotics} for the CCA with $C=\sqrt{2/3\pi}$, and Durrett and Steif \cite{durrett1991some} proved \eqref{eq:excitable_asymptotics} for the GHM with $C=\sqrt{2/27\pi}$. We recover these results and prove \eqref{eq:excitable_asymptotics} for the FCA with $C=\sqrt{8/81\pi}$, hence establishing Theorem \ref{thm:3FCA_Z}.

  \subsection{The 3-color CCA on $\mathbb{Z}$.}
  
  The CCA was introduced by Bramson and Griffeath \cite{bramson1989flux} as a discrete time analogue of the cyclic particle systems. In this model, we imagine each vertex of $\Z$ is inhabited by one of $3$ different species in a cyclic food chain, and at each step species of color $i$ are eaten (and thus replaced) by a neighboring species of color $(i+1)$ mod $3$, if one exists. More precisely, the time evolution is governed by 
  \begin{equation*}
  \text{(CCA)}\qquad X_{t+1}(v)=\begin{cases}
  X_{t}(v)+1 \,\, (\text{mod $\kappa$})& \text{if $\exists u\in\{v-1,v+1\}$ s.t. $X_{t}(u)=X_{t}(v)+1$ (mod $3$) }\\
  X_{t}(v) & \text{otherwise}.
  \end{cases}
  \end{equation*}
  
The $3$-color CCA on $\Z$ was studied extensively by Fisch \cite{fisch1992clustering} using a connection to survival of an associated simple random walk, which arises by considering an associated annihilating particle system. Namely, for a given $3$-coloring $X:\mathbb{Z}\rightarrow \mathbb{Z}_{3}$ and $x\in \mathbb{Z}$, we define its \textit{color differential} $dX(x,x+1)\in \{-1,0,1\}$ by 
  \begin{equation}
	  dX(x,x+1) = X(x+1)-X(x) \mod 3. 
  \end{equation}
  Then for each $t\ge 0$, we place an $\R$ (resp., $\L$) particle on every edge $(x,x+1)$ with $dX_{t}(x,x+1)=-1$ (resp., $1$), and leave all the other edges unoccupied. The CCA dynamics induce the dynamics on these particles, which has the following simple description: each $\R$ (resp., $\L$) particle moves to its right (resp., left) with unit speed, and if two opposing particles ever cross or have to occupy the same edge, then they annihilate each other. Hence if one wants to have an $\R$ particle on the edge $(0,1)$ at time $t$, which is the event on the left hand side of (\ref{eq:3CCA_survival}), then this $\R$ particle must have been on the edge $(-t,-t+1)$ at time $0$ and it must travel distance $t$ without being annihilated by an opposing particle. Thus, on the edges of every interval $[-t,s]$ for $-t+1\le s \le t+1$, there must be more $\R$ than $\L$ particles. The converse of this observation is also true, which gives the following (this is Proposition $5.3$ in~\cite{fisch1992clustering}).
  \begin{prop}\label{prop:survival_Z}
  	Let $(X_{t})_{t\ge 0}$ be a 3-color CCA trajectory on $\mathbb{Z}$. Then 
  	\begin{equation}\label{eq:3CCA_survival}
  	\left\{ dX_{t}(0,1)=-1  \right\} = \left\{ \sum_{x=-t}^{s} dX_{0}(x,x+1)\le -1 \text{ for all } -t\le s \le t \right\}.
  	\end{equation}  
  \end{prop}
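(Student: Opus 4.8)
The statement to prove is Proposition~\ref{prop:survival_Z}, which identifies the event $\{dX_t(0,1)=-1\}$ with a random-walk survival event for the $3$-color CCA on $\mathbb{Z}$.

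\medskip

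The plan is to track the annihilating particle system induced by the CCA dynamics and argue both inclusions in \eqref{eq:3CCA_survival} via a ``light-cone'' / characteristic-line analysis. First I would set up the bijective encoding: to a $3$-coloring $X$ assign the edge labels $dX(x,x+1)\in\{-1,0,1\}$, place an $\R$-particle on each edge with label $-1$ and an $\L$-particle on each edge with label $+1$, and recall (this is the standard CCA-to-particle dictionary, due to Bramson--Griffeath and Fisch) that under one CCA update each $\R$-particle moves one step right, each $\L$-particle moves one step left, and a colliding $\R$/$\L$ pair annihilates; crucially, because particles have speeds $\pm1$, same-type particles never collide and the relative cyclic order of $\R$'s and $\L$'s along $\mathbb{Z}$ is preserved. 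The key conserved quantity is the partial-sum (discrete ``potential'') $\Phi_t(s):=\sum_{x=a}^{s} dX_t(x,x+1)$ for fixed reference point $a$: an $\R$ at edge $(x,x+1)$ contributes $-1$ and an $\L$ contributes $+1$, so $\Phi$ counts (number of $\L$'s) minus (number of $\R$'s) to the left of $s+1$.

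\medskip

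For the forward inclusion ($\subseteq$), suppose $dX_t(0,1)=-1$, i.e.\ there is an $\R$-particle on edge $(0,1)$ at time $t$. Since $\R$-particles travel right at unit speed and can only disappear by annihilation, this particle must be the descendant of an $\R$-particle that occupied edge $(-t,-t+1)$ at time $0$ and survived (was never annihilated) through time $t$. The survival of this particle forces, for every $-t\le s\le t$, that in the initial configuration on the edges of $[-t, s]$ there are strictly more $\R$'s than $\L$'s — otherwise an opposing $\L$-particle positioned so as to meet our $\R$ within $t$ steps would annihilate it. Translating ``more $\R$'s than $\L$'s on edges indexed $-t,\dots,s$'' into the potential gives exactly $\sum_{x=-t}^{s} dX_0(x,x+1)\le -1$. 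I would make the ``would be annihilated'' step precise by the usual characteristic-crossing argument: two opposing particles annihilate in the dynamics iff their characteristic lines (slope $+1$ for $\R$, slope $-1$ for $\L$) cross within the time window, and one checks that an excess (or tie) of $\L$ over $\R$ on some prefix interval produces such a crossing, using that particles annihilate in nearest-neighbor pairs so the ``innermost'' unmatched $\L$ reaches the $\R$ in time.

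\medskip

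For the reverse inclusion ($\supseteq$), assume $\sum_{x=-t}^{s} dX_0(x,x+1)\le -1$ for all $-t\le s\le t$. Taking $s=-t$ shows edge $(-t,-t+1)$ carries an $\R$-particle at time $0$. The uniform strict inequality says this particle always has a ``surplus'' of same-type particles shielding it: any $\L$-particle that could reach it by time $t$ is preceded (closer to the $\R$) by an unmatched $\R$ that intercepts the $\L$ first, so our designated $\R$ survives to time $t$ and, moving right at unit speed from edge $(-t,-t+1)$, occupies edge $(0,1)$ at time $t$; hence $dX_t(0,1)=-1$. The cleanest way to organize both directions uniformly is to phrase everything in terms of the ``last-passage'' description of the particle positions, or equivalently to invoke the monotone pairing of annihilating $\pm1$ particles (the standard bracket-matching on the sequence of $\R$/$\L$ labels). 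I expect the main obstacle to be making the annihilation-pairing argument airtight at the boundary times $s=\pm t$ and handling the edge cases where initial labels are $0$ (empty edges) so that ``distance travelled'' and ``number of steps'' line up exactly; this is precisely where careful bookkeeping of which inequalities are strict (``$\le -1$'' vs.\ ``$\le 0$'') matters, and it is the content that Fisch's Proposition~5.3 isolates.
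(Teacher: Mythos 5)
Your proposal is correct and follows essentially the same route as the paper, which likewise encodes $dX_t$ as an annihilating $\R$/$\L$ particle system with unit speeds, argues the forward inclusion by tracing the surviving $\R$ particle back to edge $(-t,-t+1)$, and defers the converse (which you flesh out via the standard nearest-neighbor pairing argument) to Fisch's Proposition~5.3. No substantive differences to report.
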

   
  Now if $X_{0}$ is drawn from the uniform product measure on $(\mathbb{Z}_{3})^{\mathbb{Z}}$, then the initial edge increment $dX_{0}(x,x+1)$ is uniform on $\{-1,0,1\}$ and independent for all $x\in \mathbb{Z}$. By letting $\mathtt{X}_n = \mathtt{X}_{n;t} = -dX_{0}(n-1-t,n-t)$ and $g(\mathtt{X}_n) = \mathtt{X}_n$ for $n\ge 1$ and $t\ge 0$, we get 
  \begin{equation*}
	\gamma_{g}^{2} = 2/3.
  \end{equation*}
  By Proposition \ref{prop:survival_Z}, translation invariance, and Theorem \ref{thm:general_survival_Brownian}, we obtain
  \begin{equation*}
  \begin{aligned}
	  \mathbb{P}(X_{t}(x)\ne X_{t}(x+1))&=2\ \mathbb{P}(dX_{t}(0,1)=-1)\\
	  &= 2\ \mathbb{P}\left(\sum_{n=1}^{s}g(\mathtt{X}_n) \ge 1 \text{ for all }  1\le s \le 2t+1 \right)\\
	  &= 2\ \mathtt{Q}^{\bullet}(-1,2t+1)\\
	  &\sim \sqrt{\frac{2}{3\pi}}t^{-1/2}. 
	  \end{aligned}
  \end{equation*}
  
We remark here that our inspiration for Lemma \ref{lemma:key} came from considering an alternative way of analyzing the CCA. Consider a 3-color CCA trajectory $(X_{t})_{t\ge 0}$ on $\mathbb{N}_{0}$ (rather than on all of $\Z$). For each $x\in \mathbb{N}_{0}$ and $t\ge 1$, define
  \begin{equation}
	  \nbe_{t}(x) = \sum_{s=0}^{t-1}\1(X_{s}(x)\ne X_{s+1}(x)),
  \end{equation} 
  which counts the number of `excitations' of $x$ during the first $t$ iterations. In terms of the annihilating particle system, $\nbe_{t}(0)$ counts the number of $\L$ particles that reach the origin during the first $t$ iterations. Hence for all $t\ge 1$, we have 
  \begin{equation}\label{eq:nbe}
	  \nbe_{t}(0) - \nbe_{t-1}(0) = \1(dX_{t}(0,1)=1).
  \end{equation}
  By the analogue of Proposition \ref{prop:survival_Z} for $G=\mathbb{N}_{0}$ and an $\L$ particle at the origin, we have
 \begin{equation}\label{eq:3CCA_survival2}
  \{dX_{t}(0,1)=1\} = \left\{ \sum_{x=s}^{t} dX_{0}(x,x+1)\ge 1 \text{ for all } 0\le s \le t \right\}.
  \end{equation}
  That is, the event that there is an $\L$ particle at the origin at time $t$ is equivalent to the survival of the associated walk up to $t+1$ steps.
  
  On the other hand, the quantity $\nbe_{t}(0)$ is closely related to the maximum of an associated walk. The first connection between the 3-color CCA dynamics on $\mathbb{Z}$ and the maximum of associated simple random walk appears in the work of Belitsky and Ferrari \cite{belitsky1995ballistic}. In a recent joint work with Gravner \cite{gravner2016limiting}, we determined the limiting behaviors of the $3$-color CCA and GHM on arbitrary underlying graphs by making use of a so-called ``tournament expansion''.  Lemma $1$ in \cite{gravner2016limiting} states that $\nbe_{t}(x)$ on any graph $G=(V,E)$ can be expressed as the maximum of partial sums of time-0 edge increments among all walks of length $t$ starting from $x$. In particular, for $G=\mathbb{N}_{0}$ and $x=0$, it says that
  \begin{equation}\label{eq:3CCA_max}
	  \nbe_{t}(0) = \max_{0\le s \le t-1} \sum_{0\le x \le s} dX_{0}(x,x+1). 
  \end{equation}
  Combining \eqref{eq:nbe},\eqref{eq:3CCA_survival2} and~\eqref{eq:3CCA_max} gives a special case of Lemma \ref{lemma:key} for functionals $g$ taking values in $\{-1,0,1\}$.
  
   \vspace{0.2cm}
  \subsection{The 3-color GHM on $\mathbb{Z}$.}
  The GHM was introduced by Greenberg and Hastings \cite{greenberg1978spatial} to capture the phenomenological essence of neural networks in a discrete setting. Its time evolution rule is given by 
  \begin{equation*}
  (\text{GHM})\qquad X_{t+1}(v)=\begin{cases}
  1 & \text{if $X_{t}(v)=0$ and $\exists u\in N(v)$ s.t. $X_{t}(u)=1$  }\\
  0 & \text{if $X_{t}(v)=0$ and $\nexists u\in N(v)$ s.t. $X_{t}(u)= 1$  }\\
  X_{t}(v)+1\,\,\text{(mod $3$)} & \text{otherwise}.
  \end{cases}
  \end{equation*}
  In words, sites of color $0$ (rested) remain in color $0$ unless they get excited by a neighboring site of color $1$ (excited); sites of colors $1$ or $2$ (refractory) increment their colors by $1$ (mod~$3$).
  
  Consider the 3-color GHM trajectory $(X_{t})_{t\ge 0}$ on $\mathbb{Z}$, where $X_{0}$ is drawn from the uniform product measure. Since in GHM sites of color 1 excite sites of color 0, we define 
  \begin{equation*}
  dX_{0}(x,x+1) = 
  \begin{cases}
  1 & \text{if $X_0(x) = 0$ and $X_0(x+1) = 1$}\\
  -1 & \text{if $X_0(x) = 1$ and $X_0(x+1) = 0$}\\
  0 & \text{otherwise.}
  \end{cases}
  \end{equation*}
  Like the $3$-color CCA, the GHM gives rise to an annihilating particle system: for $t\ge 0$, we place an $\R$ (resp., $\L$) particle on every edge $(x,x+1)$ with $dX_{t}(x,x+1)=-1$ (resp., $1$), and leave all the other edges unoccupied. The GHM dynamics induce the same dynamics on these particles as the CCA: each $\R$ (resp., $\L$) particle moves to its right (resp., left) with unit speed, and if two opposing particles ever cross or have to occupy the same edge, then they annihilate each other. Since the particle system dynamics are the same, it is not hard to see that Proposition \ref{prop:survival_Z} still holds for GHM. However, we remark that the increments $(dX_0(x,x+1))_{x\in\Z}$ are not Markovian in the case of GHM. Indeed, given that $dX_{0}(x,x+1)=0$, the events $\{dX_{0}(x+1,x+2)=1\}$ and $\{dX_{0}(x-1,x)=1\}$ are mutually exclusive. Occurrence of both events requires $(X_{0}(x-1),X_{0}(x),X_{0}(x+1),X_{0}(x+2))=(0,1,0,1)$, which implies $dX_{0}(x,x+1)=-1$, a contradiction.
  
  In order to apply our main result, we can view $dX_{0}(x,x+1)$ as a functional $g: (\mathbb{Z}_{3})^{2}\rightarrow \{-1,0,1\}$ on a stationary Markov chain $(\mathtt{X}_{n})_{n\ge1}$. That is, to apply Proposition \ref{prop:survival_Z}, we let $\mathtt{X}_{n}=\mathtt{X}_{n;t}:=(X_{0}(n-t-1),X_{0}(n-t))$ and let $g(\mathtt{X}_n) := -dX_0(n-t-1,n-t)$ for $n\ge 1$ and $t\ge 0$. By the independence of $\mathtt{X}_{n}$ and $\mathtt{X}_{n+k}$ for all $n\ge 1$ and $k\ge 2$, it is easy to compute 
  \begin{equation*}
  \gamma_{g}^{2} = \mathbb{E}(g(\mathtt{X}_{1})^{2}) + 2\mathbb{E}(g(\mathtt{X}_{1})g(\mathtt{X}_{2})) = \frac{2}{9}-\frac{4}{27}=\frac{2}{27}. 
  \end{equation*}
  Hence Theorem \ref{thm:general_survival_Brownian} recovers the desired asymptotic 
  \begin{equation}\label{eq:3GHM_asymptotic}
  \mathbb{P}(X_{t}(x)\ne X_{t}(x+1))=2\mathtt{Q}^{\bullet}(-1,2t+1) \sim \sqrt{\frac{2}{27\pi}} t^{-1/2}.
  \end{equation}

  \vspace{0.2cm}
  \subsection{The 3-color FCA on $\mathbb{Z}$.}
  
  In this subsection we prove Theorem \ref{thm:3FCA_Z}. The FCA was introduced recently by the first author as a discrete model for pulse-coupled oscillators \cite{lyu2015synchronization}. Here, one envisions each site as a $3$-state oscillator which `blinks' whenever it has the designated `blinking color' $1$. Sites increment their colors by $1$ (mod $3$) unless they have the post-blinking color $2$ and have a blinking neighbor (of color $1$), in which case they do not update their colors (they wait for the blinking neighbor to catch up). The precise time-evolution rule is given by (\ref{FCA_transition_rule}) in the introduction.
  
  As before, consider the 3-color FCA trajectory $(X_{t})_{t\ge 0}$ on $\mathbb{Z}$ with random initial coloring $X_{0}$ drawn from the uniform product measure. As with the CCA, for each $t\ge 0$ and $x\in \mathbb{Z}$, we define $dX_{t}(x,x+1)=X_{t}(x+1)-X_{t}(x)\mod 3\in \{-1,0,1\}$. By putting an $\R$ (resp. $\L$) particle on each edge $(x,x+1)$ with $dX_{t}(x,x+1)=1$ (resp. $-1$; note that this is the reverse of CCA and GHM), the FCA induces an edge particle dynamics $(dX_{t})_{t\ge 0}$. Two features of the particle dynamics induced by the 3-color FCA are that some particles may `flip' their directions during the first iteration $X_{0}\mapsto X_{1}$, and that each particle moves with speed $1/3$ after time $t=1$.  Particles move only when their `tails' are blinking: an $\R$  (resp. $\L$) particle at $(x,x+1)$ can only move right (resp. left) if $x$ (resp. $x+1$) is in state $1$. After time $t=1$, particles annihilate when they meet or cross, as with GHM and CCA. See Figure \ref{3FCA_ex} for an illustration of these dynamics.

  \begin{figure*}[h]
  	\centering
  	\includegraphics[width=0.5 \linewidth]{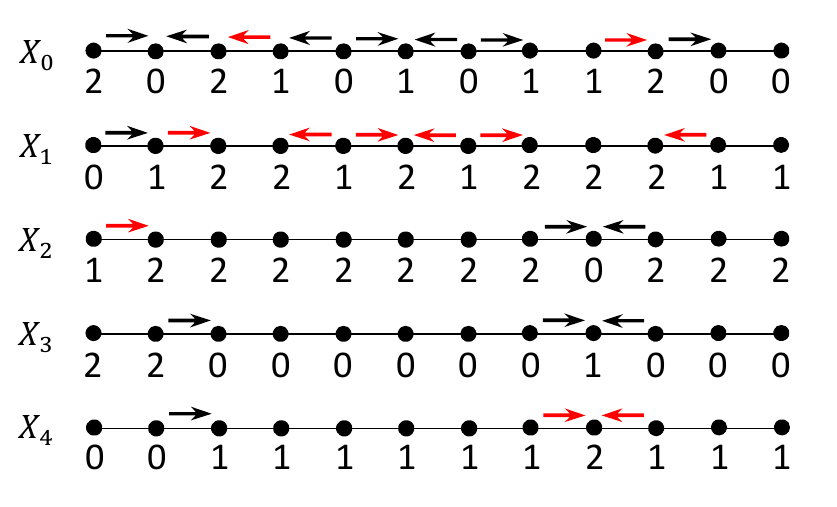}
  	\vspace{-0.3cm}
  	\caption{ Example of a 3-color FCA trajectory on a finite path. Only edge particles that have color 1 at their tails (red arrows) move at each iteration. Initial configurations of particles corresponding to color triples of $120$ and $021$ may flip their direction during the first iteration. Thereafter, particles move in their given directions with speed 1/3 until being annihilated by opposing particles.
  	}
  	\label{3FCA_ex}
  \end{figure*}

  Note that the edge particle dynamics of FCA on the time scale $3t+1$ match those of GHM and CCA (without rescaling time) after the initial step. This is formalized by the following proposition, which is the FCA counterpart of Proposition \ref{prop:survival_Z}, whose proof we give at the end of this subsection. 
  
  \begin{prop}\label{prop:FCA_survival_Z}
  	Let $(X_{t})_{t\ge 0}$ be a 3-color FCA trajectory on $\mathbb{Z}$. Then 
  	\begin{equation}\label{eq:FCA_survival}
  	\left\{ dX_{3t+1}(0,1)=1  \right\} = \left\{ \sum_{x=-t}^{s} dX_{1}(x,x+1) \ge 1  \quad \text{for all $-t\le s \le t$}   \right\}.
  	\end{equation}  
  \end{prop}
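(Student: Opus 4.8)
The plan is to translate to the induced edge-particle picture, establish that after the initial iteration the FCA becomes a speed-$1/3$ annihilating particle system, and then reproduce, with left and right interchanged, the argument behind Proposition~\ref{prop:survival_Z}. Recall the bookkeeping: at time $t$ one places an $\R$ (resp.\ $\L$) particle on edge $(x,x+1)$ exactly when $dX_{t}(x,x+1)=1$ (resp.\ $-1$), so the edges with $dX_{t}=0$ partition $\mathbb{Z}$ into maximal monochromatic blocks.

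The first step is to make precise the edge-particle dynamics recalled before the proposition: for every $t\ge 1$ the non-particle edges still partition $\mathbb{Z}$ into monochromatic blocks, each particle advances one edge in a fixed direction ($\R$ to the right, $\L$ to the left) on exactly one out of every three consecutive steps---so it travels at speed exactly $1/3$---and an $\R$ and an $\L$ particle that come to occupy adjacent edges annihilate. This is verified by a bounded case analysis of~\eqref{FCA_transition_rule} on a window around each particle, tracking the colour of the block immediately adjacent to it (this colour cycles with period $3$ and determines when the particle is ``active''); the only mildly delicate configurations are size-one blocks and an $\R$--$\L$ pair about to collide, where one checks directly that the freeze clause in~\eqref{FCA_transition_rule} produces exactly the claimed behaviour. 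A consequence is that over the $3t$ steps from time $1$ to time $3t+1$, a particle that is never annihilated makes exactly $t$ unit moves.

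Granting this, fix $t\ge 1$. On $\{dX_{3t+1}(0,1)=1\}$ there is an $\R$ particle on edge $(0,1)$ at time $3t+1$; tracing it back $t$ moves shows it occupied edge $(-t,-t+1)$ at time $1$ and was not annihilated in between. Conversely, if $\sum_{x=-t}^{s} dX_{1}(x,x+1)\ge 1$ for all $-t\le s\le t$, then the case $s=-t$ gives an $\R$ particle on edge $(-t,-t+1)$ at time $1$, and the remaining inequalities force it never to meet a surviving $\L$ particle, so it reaches edge $(0,1)$ at time $3t+1$. The equivalence between survival of this distinguished right-moving particle and the ballot inequality $\sum_{x=-t}^{s} dX_{1}(x,x+1)\ge 1$ on $-t\le s\le t$ is exactly the combinatorial fact underlying the proof of Proposition~\ref{prop:survival_Z}, now with the roles of $\R$ and $\L$ (equivalently, of left and right) exchanged; this yields~\eqref{eq:FCA_survival}.

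I expect the only genuinely nontrivial step to be the structural description of the FCA edge dynamics for $t\ge 1$ in the second paragraph---confirming that after a single iteration the FCA settles into a clean speed-$1/3$ annihilating dynamics with the phase-locking property. Once that is secured, the equivalence~\eqref{eq:FCA_survival} follows from the standard particle-survival/ballot dictionary exactly as for the CCA, the factor $3$ together with the shift by $1$ in the subscript of $X$ being just the bookkeeping for ``speed $1/3$ started at time $1$''.
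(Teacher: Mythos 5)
Your proposal follows essentially the same route as the paper: establish by a local case analysis that from time $1$ onward the FCA edge particles are a speed-$1/3$, direction-preserving annihilating system, and then apply the standard particle-survival/ballot dictionary (as in Proposition~\ref{prop:survival_Z}, with $\R$ and $\L$ exchanged) over the $3t$ steps from time $1$ to time $3t+1$. The one ingredient your deferred case analysis must make explicit is that the patterns $120$ and $021$ cannot occur at times $t\ge 1$ (Proposition~\ref{no_flips}); this is precisely what rules out direction flips after the first iteration and is used repeatedly in the paper's verification of the claimed dynamics.
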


  Notice that the event on the right in \eqref{eq:FCA_survival} uses edge increments $dX_{1}$ instead of the initial ones $dX_{0}$; this is to avoid the issue of particles changing their direction in the first step. 

As for the 3-color FCA, the increments $dX_{1}(x,x+1)$ are not Markovian. In fact, they have infinite-range correlation, contrary to the CCA or GHM. To see this, we first observe that no three consecutive sites have color triples $(1,2,0)$ or $(0,2,1)$ after time $t=1$.
  
  \begin{prop}\label{no_flips}
  	Let $(X_{t})_{t\ge 0}$ be a 3-color FCA trajectory on $\mathbb{Z}$. Then for all $t\ge 1$ and $x\in \mathbb{Z}$, 
  	\begin{equation*}
  	(X_{t}(x-1),X_{t}(x),X_{t}(x+1))\notin \{(1,2,0), (0,2,1)\}.
  	\end{equation*}
  \end{prop}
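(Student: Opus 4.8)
The plan is to prove the forbidden triples cannot appear after one step of the FCA dynamics, and then argue by induction that they remain absent. First I would tackle the base case $t=1$: given any three consecutive sites $(X_0(x-1), X_0(x), X_0(x+1))$, I would enumerate which initial triples can produce $(1,2,0)$ or $(0,2,1)$ under one application of the transition rule \eqref{FCA_transition_rule}, and check directly that none do. The key leverage is the update rule for the middle site: to have $X_1(x) = 2$, either $X_0(x) = 1$ and $x$ is incremented, or $X_0(x) = 2$ and $x$ is \emph{held} (i.e.\ $X_0(x)=2$ with a neighbor of color $1$). In the first case $X_0(x) = 1$, so neither neighbor had color $1$ blocking anything, and both $X_0(x\pm1)$ get incremented (as no site of color $1$ is ever held); tracking these forces the images of the neighbors and rules out both patterns. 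In the second case $X_0(x) = 2$ is held, which already requires a blinking neighbor, so at least one of $X_1(x-1), X_1(x+1)$ comes from a site that was color $1$ and is incremented to $2$ — contradicting that the target triples have the outer entries in $\{0,1\}$, not $2$. Either way the two bad triples are excluded at $t=1$.

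For the inductive step, suppose the conclusion holds at time $t \ge 1$; I want it at time $t+1$. The argument is the same case analysis as above but now run from a configuration at time $t$ that is already known to avoid $(1,2,0)$ and $(0,2,1)$, so I get an even stronger starting hypothesis than in the base case. In fact the base-case analysis above only used that a site of color $2$ is incremented \emph{unless} it has a blinking neighbor — which is exactly the rule — so the same enumeration applies verbatim with $t$ in place of $0$ and $t+1$ in place of $1$, giving the result for all $t \ge 1$.

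The main obstacle I anticipate is purely bookkeeping: the update of each of the three sites depends on its own neighbors, so $X_{t+1}(x-1)$ depends on $X_t(x-2)$ and $X_{t+1}(x+1)$ depends on $X_t(x+2)$, meaning a careful treatment really looks at five consecutive sites and splits into subcases according to whether the relevant neighbors are blinking. This is finitely many cases over $\mathbb{Z}_3^5$, but one must organize them cleanly — I would structure the proof around the value of $X_t(x)$ (three cases: $0$, $1$, $2$) and, within the $X_t(x)=2$ case, on whether $x$ is held, since that is the only situation where a site fails to increment. I expect no conceptual difficulty beyond making sure every branch is covered; the induction itself is immediate once the one-step claim is in hand.
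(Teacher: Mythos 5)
Your overall strategy --- classify the preimages of the middle entry $2$ into ``$X_{t-1}(x)=1$, incremented'' versus ``$X_{t-1}(x)=2$, held'' and derive a contradiction in each branch --- is exactly the paper's argument, and your second branch is correct (indeed slightly slicker than the paper's: the paper reads off the forced preimages $X_{t-1}(x-1)=0$, $X_{t-1}(x+1)=2$ to see that neither neighbor blinks, whereas you note that a held $2$ forces a blinking neighbor which then becomes $2$, clashing with the outer entries of the target triples). The induction scaffolding is unnecessary, as you yourself observe: the one-step analysis uses nothing about the configuration at the earlier time, so it applies directly to $X_{t-1}\mapsto X_{t}$ for every $t\ge 1$; likewise the five-site enumeration is overkill, since the contradiction lives entirely on the three sites $x-1,x,x+1$.

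However, your first branch contains a genuine error. You assert that when the middle site has color $1$, ``both $X_0(x\pm 1)$ get incremented,'' justified by the remark that a site of color $1$ is never held. That remark is true but irrelevant, and the assertion is false: the outer site that must end at color $0$ is forced to have color $2$ at the earlier time, and a color-$2$ site adjacent to the blinking site $x$ is precisely a site that is \emph{held}. Worse, if your assertion were true the argument would fail rather than succeed: incrementing every entry of the triple $(0,1,2)$ yields exactly $(1,2,0)$, so ``everything increments'' does not rule out the pattern --- it produces it. The correct mechanism, which is the one the paper uses, is the opposite of what you wrote: since $x$ blinks, its color-$2$ neighbor is held at $2$ and therefore cannot reach $0$ at the next step. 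With that one step corrected, your case analysis closes and coincides with the paper's proof.
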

  
  \begin{proof}
  	Suppose for the sake of contradiction that $(X_{t}(x-1),X_{t}(x),X_{t}(x+1))=(1,2,0)$ for some $x\in \mathbb{Z}$ and $t\ge 1$. Then we must have $X_{t-1}(x-1)=0$ and $X_{t-1}(x+1)=2$ and  $X_{t-1}(x)\in \{1,2\}$. If $X_{t-1}(x)=1$, then $x$ is blinking, which causes $x+1$ to not update its state, so $X_{t}(x+1)=2\ne 0$, a contradiction. If $X_{t-1}(x)=2$, then $X_{t}(x)=0\neq 2$, since neither $x-1$ nor $x+1$ are blinking at time $t-1$; this is also a contradiction. This shows the assertion. 
  \end{proof}
  
  Now the color patterns `120' and `021' are prohibited at time 1. For any $x\ge 0$, the event $dX_{1}(x,x+1)=1$ is dependent on the event $dX_{1}(-1,0)=1$: with positive probability we have $X_{1}(0)=2$, $X_{1}\equiv 0$ on $[1,x)$, and $X_{1}(x)=1$; then $dX_{1}(-1,0)=1$ would imply that we have a `120' pattern on $[-1,1]$ at time 1, which is impossible. Hence the time 1 increments $dX_{1}(x,x+1)$ have long-range correlation.
  
  Still, we can handle this long-range correlation by viewing the increments as given by a functional of the underlying stationary Markov chain of color quadruples at time 0. 
  
  \vspace{0.2cm}
  \hspace{-0.42cm}\textbf{Proof of Theorem \ref{thm:3FCA_Z}.} For each $x\in \mathbb{Z}$, $dX_{1}(x,x+1)$ is a function of the initial colors on the four sites in the interval $[x-1,x+2]$. So for each fixed $t\ge 0$, we may take the underlying chain $\mathtt{X}_{n}=\mathtt{X}_{n;t}=(X_{0}(n-2-t),X_{0}(n-1-t),X_{0}(n-t),X_{0}(n+1-t))$ for $n\ge 1$, and define $g:(\mathbb{Z}_{3})^{4}\to\{-1,0,1\}$ by updating a given coloring on the path of four nodes according to the FCA rule and taking color differential of the middle two sites. That is,
  $$
  g(\mathtt{X}_n) = X_1(n-t) - X_1(n-1-t) \quad \text{ for $n\ge 1$.}
  $$

  
  
  Since $X_{0}$ is drawn from the product measure, two increments from disjoint quadruples are independent. By a direct calculation or using Proposition \ref{prop:gamma_formula}, we can compute 
  \begin{equation*}
	  \gamma_{g}^{2} = \frac{40}{81} + 2\left( -\frac{17}{243} - \frac{19}{729} - \frac{2}{729} \right) = \frac{8}{27}.
  \end{equation*}  
  Hence Theorem \ref{thm:general_survival_Brownian} yields 
  \begin{equation*}
	  \mathtt{Q}^{\bullet}(-1,2t+1) \sim \sqrt{\frac{2}{27\pi}} t^{-1/2}.
	   \vspace{0.1cm}
  \end{equation*} 
  Furthermore, we note that the probability $\mathbb{P}(X_{t}(x)\ne X_{t}(x+1))$ is the edge particle density on $\mathbb{Z}$ at time $t$, which is independent of the location $x\in \mathbb{Z}$. Thus by Proposition \ref{prop:FCA_survival_Z}, we have 
  \begin{equation}\label{eq:3FCA_Z_asymptotic}
  \mathbb{P}(X_{3t+1}(x)\ne X_{3t+1}(x+1)) = 2\ \mathtt{Q}^{\bullet}(-1,2t+1) \sim \sqrt{\frac{8}{27\pi}} t^{-1/2}.
  \end{equation} 
  
  To finish, we observe that $\mathbb{P}(X_{t}(x)\ne P_{t}(x+1))$ is independent of $x\in \mathbb{Z}$ by translation invariance of the initial measure and the transition map. Denote this probability by $p_{t}$. Moreover, the coloring at time $t$, $(X_t(x))_{x\in\Z}$, is a $2t$-dependent stationary sequence, which implies the sequence of indicators $(\1(X_t(x)\ne X_t(x+1)))_{x\in \Z}$ is mixing and therefore ergodic. Hence, by Birkhoff's ergodic theorem, we have 
  \begin{equation*}
 	p_{t} = \lim_{M\rightarrow \infty} \frac{\text{$\#$ of particles on $[-M,M]$ at time $t$ }}{2M} \quad \text{a.s..}
  \end{equation*}  
  The fact that particles can only move at most $1$ space or annihilate in a single step (after time $t=1$) implies that $p_{t}$ is non-increasing in $t\ge 1$, since
    \begin{equation*}
    \begin{aligned}
 	p_{t+1} &= \lim_{M\rightarrow \infty} \frac{\text{$\#$ of particles on $[-M,M]$ at time $t+1$ }}{2M} \\
	&\le \lim_{M\rightarrow \infty} \frac{2+ \text{$\#$ of particles on $[-M,M]$ at time $t$ }}{2M}\\
	&= p_t.
     \end{aligned}
  \end{equation*} 
   This yields $p_{3t}\sim p_{3t+1}\sim p_{3t+2}$, so we obtain the assertion by a time change in \eqref{eq:3FCA_Z_asymptotic}. $\blacksquare$

  \vspace{0.2cm}
  We finish this section by giving a proof of Proposition \ref{prop:FCA_survival_Z}.
  
  \vspace{0.1cm}
  \hspace{-0.42cm}\textbf{Proof of Proposition \ref{prop:FCA_survival_Z}.} Recall the definition of color differential $dX_{t}(x,x+1)$ for the 3-color FCA given at the beginning of the subsection. At time $t=1$, on each edge $(x,x+1)$ with $dX_{1}(x,x+1)=1$ (resp. $-1$) we put an $\R$ particle (resp. $\L$), and otherwise leave the edge unoccupied. We show that the particle dynamics induced by $(dX_{t})_{t\ge 1}$ is so that all $\R$ and $\L$ particles move with constant speed $1/3$ without changing their direction, and two opposing particles that have to occupy the same edge or cross each other annihilate. By symmetry, consider an $\R$ particle on the edge $(x,x+1)$ at time $t\ge 1$. We show that
	\begin{description}[noitemsep]
		\item{(i)} If $X_{t}(x)\ne 1$, then the $\R$ particle is on the same edge $(x,x+1)$ at time $t+1$;
		\vspace{0.1cm}
		\item{(ii)} If $X_{t}(x)= 1$ and there is an $\L$ particle on the edge $(x+1,x+2)$ or $(x+2,x+3)$ at time $t$, then the $\R$ particle is annihilated at time $t+1$; otherwise, it is on the edge $(x+1,x+2)$ at time $t+1$. 
		\vspace{0.1cm}
		\item{(iii)} If the $\R$ particle jumps to the edge $(x+1,x+2)$ during $X_{t}\mapsto X_{t+1}$, then it stays on that edge through time $t+3$, and either gets annihilated or jumps to the edge $(x+2,x+3)$ during $X_{t+3}\mapsto X_{t+4}$. 
	\end{description}

	First suppose $(X_{t}(x),X_{t}(x+1))=(0,1)$. Then $(X_{t+1}(x),X_{t+1}(x+1))=(1,2)$, so the $\R$ particle stays at the same edge without being annihilated. If  $(X_{t}(x),X_{t}(x+1))=(2,0)$, then since $X_{t}(x-1)\ne 1$ by Proposition \ref{no_flips}, we have $(X_{t+1}(x),X_{t+1}(x+1))=(0,1)$, so the $\R$ particle is on the same edge $(x,x+1)$ at time $t+1$. This shows (i). To show (ii), suppose $(X_{t}(x),X_{t}(x+1))=(1,2)$. By Proposition \ref{no_flips}, $X_{t}(2)\in \{1,2\}$. If $X_{t}(2)=1$, then $(X_{t+1}(0),X_{t+1}(1),X_{t+1}(2))=(2,2,2)$ so the $\R$ (and opposing $\L$) particle is annihilated during $X_{t}\mapsto X_{t+1}$. If $X_{t}(2)=2$ and $X_{t}(3)=1$, then $(X_{t+1}(0),X_{t+1}(1),X_{t+1}(2),X_{t+1}(3))=(2,2,2,2)$ so the similar conclusion holds. Otherwise, $X_{t}(3)\ne 1$, so we have  $(X_{t+1}(0),X_{t+1}(1),X_{t+1}(2),X_{t+1}(3))=(2,2,0,*)$ so the $\R$ particle is on the edge $(1,2)$ at time $t+1$. This shows (ii). Lastly, to show (iii), suppose the $\R$ particle jumps to the edge $(x+1,x+2)$ during $X_{t}\mapsto X_{t+1}$. By the previous cases and Proposition \ref{no_flips},  $(X_{t+1}(x),X_{t+1}(x+1),X_{t+1}(x+2))=(*,2,0)$ where $*\ne 1$. So $(X_{t+2}(x+1),X_{t+2}(x+2))=(0,1)$ and $(X_{t+3}(x+1),X_{t+3}(x+2))=(1,2)$. Hence (iii) follows from (i) and (ii).

	Now we are ready to prove the assertion. Suppose $dX_{3t+1}(0,1)=1$ for some $t\ge 0$. By (iii), the $\R$ particle on the edge $(0,1)$ at time $3t+1$ must have been on the edge $(-t,-t+1)$ at time $1$. By (i)-(iii), this $\R$ particle reaches the edge $(0,1)$ at time $3t+1$ if and only if there are strictly more $\R$ particles than $\L$ particles at time $1$ on the edges in the interval $[-t,-t+s]$ for all $1\le s \le 2t+1$. This is exactly the event in the right hand side of the assertion. This shows the assertion. $\blacksquare$

	\vspace{0.4cm}
	\section*{Acknowledgements} Hanbaek Lyu was partially supported by the university Presidential Fellowship. David Sivakoff was partially supported by NSF grant DMS--1418265.

	\small{
	\bibliographystyle{plain}   
	\bibliography{mybib}  
}
	
	
	
	

\end{document}